\numberwithin{equation}{section}
\numberwithin{figure}{section}
\theoremstyle{plain}
\newtheorem{thm}{\protect\theoremname}
\theoremstyle{plain}
\newtheorem{cor}[thm]{\protect\corollaryname}
\theoremstyle{definition}
\newtheorem{defn}[thm]{\protect\definitionname}
\theoremstyle{plain}
\newtheorem{prop}[thm]{\protect\propositionname}
\theoremstyle{plain}
\newtheorem{lem}[thm]{\protect\lemmaname}
\theoremstyle{remark}
\newtheorem{rem}[thm]{\protect\remarkname}
\date{}
\providecommand{\corollaryname}{Corollary}
\providecommand{\definitionname}{Definition}
\providecommand{\lemmaname}{Lemma}
\providecommand{\propositionname}{Proposition}
\providecommand{\remarkname}{Remark}
\providecommand{\theoremname}{Theorem}
\begin{document}
\global\long\def\divg{{\rm div}\,}%

\global\long\def\curl{{\rm curl}\,}%

\global\long\def\rt{\mathbb{R}^{3}}%

\global\long\def\rn{\mathbb{R}^{n}}%

\global\long\def\dir{\mathcal{E}}%

\title{Markov semi-groups generated by elliptic operators with divergence-free
drift}
\author{Zhongmin Qian\thanks{Research supported partly by an ERC grant Esig ID 291244. Mathematical
Institute, University of Oxford, OX2 6GG, England. Email: qianz@maths.ox.ac.uk} \ and\ Guangyu Xi\thanks{This work was supported by Engineering and Physical Sciences Research
Council {[}EP/L015811/1{]}. Department of mathematics, University
of Maryland, College Park. 20742, US. Email: gxi@umd.edu}}
\maketitle
\begin{abstract}
In this paper we construct a conservative Markov semi-group with generator
$L=\Delta+b\cdot\nabla$ on $\rn$, where $b$ is a divergence-free
vector field which belongs to $L^{2}\cap L^{p}$ with $\frac{n}{2}<p$.
The research is motivated by the question of understanding the blow-up
solutions of the fluid dynamic equations, which attracts a lot of
attention in recent years. 
\end{abstract}
\textit{MSC: Primary 60J35; Secondary 47D07}

\medskip

\noindent \emph{Keywords}: divergence-free vector field, Markov semi-group,
non-symmetric Dirichlet form

\section{Introduction and the main result}

In fluid dynamics, the velocity $u(t,x)$ of fluid particles is described
by, in the case of incompressible fluids, the Navier-Stokes equations
\begin{equation}
\partial_{t}u+u\cdot\nabla u=\nu\triangle u-\nabla p,\quad\nabla\cdot u=0,\label{eq:nseq1}
\end{equation}
in a domain of Euclidean space $\mathbb{R}^{3}$, subject to certain
initial and boundary conditions. Here $p(t,x)$ is the pressure which
is uniquely determined by $u(t,x)$ up to a constant at every $t$,
and it solves the Poisson equation $\Delta p=-\divg(u\cdot\nabla u)$.
Hence $p(t,x)$ is a non-linear and non-local term in the Naiver-Stokes
equations. 

The first equation in (\ref{eq:nseq1}) can be written as a parabolic
type equation:
\begin{equation}
\left(\partial_{t}-\nu\triangle+u\cdot\nabla\right)u=-\nabla p,\label{eq:pbe1}
\end{equation}
which however possesses no much common features as (local) parabolic
equations. But nevertheless the theory of parabolic equations is helpful
in the analysis of the Navier-Stokes equations. It is a matter of
fact that many quantities related to fluid flows such as the vorticity
and the stress tensor fields also satisfy the same kind of parabolic
evolution equations with the principal parabolic operator $\partial_{t}-L$,
where $L=\nu\triangle-u\cdot\nabla$. The operator $L$ is time non-homogeneous
since $u$ depends on $t$, and its formal adjoint $L^{*}=\nu\triangle+u\cdot\nabla$
is the infinitesimal generator of a diffusion process, called Taylor's
diffusion, which models the fluid flows in terms of Brownian particles.
Taylor's diffusion solves formally the following stochastic differential
equation
\begin{equation}
dX_{t}=u(t,X_{t})+\sqrt{2\nu}dW_{t},\label{eq:diff1}
\end{equation}
where $W_{t}$ is the Brownian motion. Taylor's diffusion has been
an important tool in the study of turbulent flows and in the development
of numerical simulations to the solutions of the Navier-Stokes equations
(such as vortex methods). For the Navier-Stokes equations, only global
weak solutions have been constructed in general, and knowledge of
weak solutions is still limited. There is a vast literature addressing
the regularity of weak solutions, see e.g. \cite{seregin2015lecture,temam1984navier}.
Leray's weak solution $u(t,x)$ satisfies the energy balance equation,
which implies that 
\begin{equation}
u(t,x)\in L^{2}(0,T;H^{1})\cap L^{\infty}(0,T;L^{2}).\label{eq: leray weak solution}
\end{equation}
For the most interesting case where dimension is three, this regularity
only implies that $u(t,x)\in L^{2}(0,T;L^{6}(\mathbb{R}^{3}))\cap L^{\infty}(0,T;L^{2}(\rt))$
and the classical parabolic regularity theory fails to apply.

Consider the following parabolic equation 
\begin{equation}
\partial_{t}u(t,x)-\sum_{i,j=1}^{n}\partial_{x_{i}}(a_{ij}(t,x)\partial_{x_{j}}u(t,x))+\sum_{i=1}^{n}b_{i}(t,x)\partial_{x_{i}}u(t,x)=0\label{eq: 1. problem equation}
\end{equation}
on $\rn$. A classical monograph on this class of parabolic equations
is \cite{ladyzhenskaia1988linear} by Ladyzhenskaya et al., in which
the existence of a unique Hölder continuous weak solution $u$ is
proved under the conditions that $a$ is uniform elliptic and $b\in L^{l}(0,T;L^{q}(\rn))$
with $\frac{2}{l}+\frac{n}{q}\leq1$, $l\neq\infty$. It is not known
in general whether a Leray's weak solution has this regularity or
not. Actually we know that when $n=3$, (\ref{eq: leray weak solution})
implies that $\frac{2}{l}+\frac{3}{q}=\frac{3}{2}$. This motivates
us to consider the cases that $\frac{2}{l}+\frac{n}{q}>1$ together
with the assumption that $b$ is divergence-free.

If $(a_{ij}$) and $(b_{i}$) are smooth, then there exists a unique
fundamental solution $\Gamma(t,x;\tau,\xi)$ associated with the Cauchy
initial problem (\ref{eq: 1. problem equation}). In \cite{qian2017parabolic},
the following Aronson type estimate in the time-inhomogeneous case
with a super-critical drift $b$ has been established, see also the
related estimates in \cite{aronson1968non,norris1991estimates,qian2016parabolic,zhang2004strong}.
\begin{thm}
\label{thm: 1 upper bound}Suppose $a=(a_{ij})$ and $b=(b_{i})$
are smooth which satisfy that $\lambda\vert\xi\vert^{2}\leq\sum_{i,j=1}^{n}a_{ij}\xi_{i}\xi_{j}\leq\frac{1}{\lambda}\vert\xi\vert^{2}$
and $\divg b=0$, and assume that $b\in L^{l}(0,T;L^{q}(\rn))$ for
some $n\geq3$, $l>1$, $q>\frac{n}{2}$ such that $1\leq\frac{2}{l}+\frac{n}{q}<2$.
If $\mu\equiv\frac{2}{2-\gamma+\frac{2}{l}}>1$ with $\gamma=\frac{2}{l}+\frac{n}{q}$,
then the fundamental solution has upper bound
\begin{equation}
\Gamma(t,x;\tau,\xi)\leq\begin{cases}
\frac{C_{1}}{(t-\tau)^{n/2}}\exp\left(-\frac{1}{C_{2}}\left(\frac{\vert x-\xi\vert^{2}}{t-\tau}\right)\right) & \frac{\vert x-\xi\vert^{\mu-2}}{(t-\tau)^{\mu-\nu-1}}<1\\
\frac{C_{1}}{(t-\tau)^{n/2}}\exp\left(-\frac{1}{C_{2}}\left(\frac{\vert x-\xi\vert^{\mu}}{(t-\tau)^{\nu}}\right)^{\frac{1}{\mu-1}}\right) & \frac{\vert x-\xi\vert^{\mu-2}}{(t-\tau)^{\mu-\nu-1}}\geq1
\end{cases}\label{eq: 3. upper bound-2}
\end{equation}
where $\nu=\frac{2-\gamma}{2-\gamma+\frac{2}{l}}$, $\Lambda=\Vert b\Vert_{L^{l}(0,T;L^{q}(\rn))}$,
$C_{1}=C_{1}(l,q,n,\lambda)$, $C_{2}=C_{2}(l,q,n,\lambda,\Lambda)$.
If $\mu=1$, so that $q=\infty$, then
\begin{equation}
\Gamma(t,x;\tau,\xi)\leq\frac{C_{1}}{(t-\tau)^{n/2}}\exp\left(-\frac{(C_{1}\Lambda(t-\tau)^{\nu}-\vert x-\xi\vert)^{2}}{4C_{1}(t-\tau)}\right).
\end{equation}
\end{thm}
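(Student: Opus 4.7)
The plan is to combine a Nash--Moser iteration scheme with Davies' exponential perturbation method, both driven by the algebraic cancellation of the drift in $L^{2}$ energy identities that the divergence-free assumption \ref{eq: 1. divergence free} affords. First I would derive the on-diagonal estimate $\Gamma(t,x;\tau,\xi)\leq C_{1}(t-\tau)^{-n/2}$: multiplying \eqref{eq: 1. problem equation} by $u$ and integrating, the drift contribution is
\[
\int u\,b\cdot\nabla u\,dx \;=\; \tfrac12\int b\cdot\nabla(u^{2})\,dx \;=\; -\tfrac12\int (\nabla\cdot b)\,u^{2}\,dx \;=\; 0,
\]
so the clean energy identity $\tfrac12\partial_{t}\|u\|_{2}^{2} + \int a_{ij}\partial_{i}u\,\partial_{j}u\,dx = 0$ holds. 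Together with \ref{eq: 1. ellipticity} and the standard Sobolev--Moser iteration this yields $L^{1}\to L^{\infty}$ ultracontractivity with the claimed rate $t^{-n/2}$ and constant $C_{1}$ independent of $b$.

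The second step is to upgrade this to an off-diagonal bound by Davies' method. Conjugating the semigroup by a Lipschitz weight $w=e^{\alpha\psi}$ with $\psi(x)=\langle\eta,x\rangle$, $|\eta|=1$, and repeating the weighted energy computation, the divergence-free structure again kills all but one drift term: the only surviving contribution is the zero-order
\[
\alpha\int (b\cdot\nabla\psi)\,w^{2}u^{2}\,dx,
\]
which one controls by Hölder's inequality of exponent $q$ in space and exponent $l$ in time against $\Lambda=\|b\|_{L^{l}L^{q}}$. Running the Moser iteration through the weighted identity then produces a perturbed ultracontractive estimate of the form
\[
\|w\,P_{t,\tau}w^{-1}\|_{1\to\infty} \;\lesssim\; (t-\tau)^{-n/2}\exp\!\bigl(c\alpha^{2}(t-\tau)+c'\Lambda\alpha(t-\tau)^{1-\nu}\bigr),
\]
with $\nu=(2-\gamma)/(2-\gamma+2/l)$; the exponent $1-\nu$ is dictated by the time Hölder pairing.

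Finally I would optimize over $\alpha>0$ with $\eta$ pointing from $\xi$ to $x$, maximizing $\alpha|x-\xi|-c\alpha^{2}(t-\tau)-c'\Lambda\alpha(t-\tau)^{1-\nu}$. This one-dimensional optimization splits into two regimes depending on which negative term dominates; the crossover lies precisely at $|x-\xi|^{\mu-2}/(t-\tau)^{\mu-\nu-1}\sim 1$ with $\mu=2/(2-\gamma+2/l)$, producing the Gaussian exponent $|x-\xi|^{2}/(t-\tau)$ on the diffusive side and the stretched-exponential $(|x-\xi|^{\mu}/(t-\tau)^{\nu})^{1/(\mu-1)}$ on the drift-dominated side; in the borderline case $\mu=1$ (i.e.\ $q=\infty$) the optimization degenerates into a pure shift of the Gaussian with centre displaced by $C_{1}\Lambda(t-\tau)^{\nu}$. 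The main obstacle, I expect, will be closing the Moser iteration in the supercritical range $\gamma\geq 1$: the drift error does not absorb into a single application of Young's inequality, so the $\Lambda$-dependent correction must be propagated through every level of the iteration. This propagation is precisely what forces $C_{2}$ to depend on $\Lambda$ whereas $C_{1}$ does not.
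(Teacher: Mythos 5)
First, a point of context: the paper does not actually prove this theorem --- it is quoted from the reference \cite{qian2017parabolic}, so there is no in-paper argument to compare against. Your overall architecture (Nash--Moser on-diagonal bound exploiting the cancellation $\int u\,b\cdot\nabla u\,dx=0$, then Davies' exponential perturbation, then optimization over $\alpha$) is the same strategy the paper attributes to its sources (cf.\ the discussion of \cite{davies1987explicit}), and your first step is sound: the divergence-free cancellation survives at every power $\vert u\vert^{p}$ of the iteration and for the adjoint equation as well, so $C_{1}$ is independent of $b$.

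There is, however, a genuine quantitative error in your second step which breaks the third. The surviving drift term $\alpha\int(b\cdot\nabla\psi)w^{2}u^{2}\,dx$ is \emph{not} controlled linearly in $\alpha$. Writing $v=wu$ and estimating by H\"older in space, $\le\alpha\Vert b(t)\Vert_{L^{q}}\Vert v\Vert_{L^{2q'}}^{2}$, then Gagliardo--Nirenberg $\Vert v\Vert_{L^{2q'}}^{2}\le C\Vert\nabla v\Vert_{L^{2}}^{2\theta}\Vert v\Vert_{L^{2}}^{2(1-\theta)}$ with $\theta=\frac{n}{2q}$, and Young's inequality to absorb $\Vert\nabla v\Vert_{L^{2}}^{2}$ into the elliptic term, leaves a zero-order coefficient of size $(\alpha\Vert b(t)\Vert_{L^{q}})^{1/(1-\theta)}=(\alpha\Vert b(t)\Vert_{L^{q}})^{\mu}$, since $1/(1-\theta)=\frac{2q}{2q-n}=\mu$; H\"older in time with exponent $l/\mu$ (admissible because $\gamma<2$ is equivalent to $\mu<l$) then yields a correction $c'(\Lambda\alpha)^{\mu}(t-\tau)^{1-\mu/l}=c'(\Lambda\alpha)^{\mu}(t-\tau)^{\nu}$, which is superlinear in $\alpha$ and carries the time exponent $\nu$, not the $c'\Lambda\alpha(t-\tau)^{1-\nu}$ you wrote. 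This is not cosmetic: a correction \emph{linear} in $\alpha$ merely translates the maximizer of $\alpha\vert x-\xi\vert-c\alpha^{2}(t-\tau)$ and can only ever produce a shifted Gaussian (your $\mu=1$ formula), so your final optimization cannot generate the stretched-exponential regime $\bigl(\vert x-\xi\vert^{\mu}/(t-\tau)^{\nu}\bigr)^{1/(\mu-1)}$ at all; that regime arises precisely from balancing $\alpha\vert x-\xi\vert$ against the superlinear term $\alpha^{\mu}(t-\tau)^{\nu}$ when it dominates $\alpha^{2}(t-\tau)$ at the optimal $\alpha$. With the corrected correction term the comparison of $c\alpha^{2}(t-\tau)$ and $c'(\Lambda\alpha)^{\mu}(t-\tau)^{\nu}$ at $\alpha\sim\vert x-\xi\vert/(t-\tau)$ does reproduce the stated threshold $\vert x-\xi\vert^{\mu-2}/(t-\tau)^{\mu-\nu-1}\lessgtr1$ and both displayed bounds, so the skeleton of your argument is right, but the intermediate inequality must be fixed before the last step can be run; the propagation of this $\Lambda$-dependent zero-order term through every level of the weighted Moser iteration, which you correctly flag, is where the real work lies.
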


The upper bound in Theorem \ref{thm: 1 upper bound} implies that
$\Gamma(t,x;\tau,\xi)$ decays exponentially in space variables, which
yields the pre-compactness of the family of probability measures defined
by $\varGamma$, in the sense that, the family of finite dimensional
distributions
\[
\prod_{i=1}^{n}\Gamma(t_{i},x_{i};t_{i-1},x_{i-1})\ dx_{0}\cdots dx_{n}
\]
for fixed $s\leq t_{0}<t_{1}<\cdots<t_{n}$ is relatively compact
under the topology of weak convergence in measure. The relative compactness
allows us to construct $\Gamma(t,x;\tau,\xi)$ for Borel measurable
$a$ and $b$ which satisfy that $\lambda\vert\xi\vert^{2}\leq\sum_{i,j=1}^{n}a_{ij}\xi_{i}\xi_{j}\leq\frac{1}{\lambda}\vert\xi\vert^{2}$,
$\divg b=0$, and $b\in L^{l}(0,T;L^{q}(\rn))$ with $\frac{2}{l}+\frac{n}{q}\in[1,2)$,
but the convergence is too weak to ensure the Chapman–Kolmogorov equation,
i.e. 

\[
\Gamma(t,x;\tau,\xi)=\int_{\rn}\Gamma(t,x;s,y)\Gamma(s,y;\tau,\xi)\ dy.
\]

Therefore, in this paper, we consider the time-homogeneous parabolic
equation 

\begin{equation}
\partial_{t}u(t,x)-\sum_{i,j=1}^{n}\partial_{x_{i}}(a_{ij}(x)\partial_{x_{j}}u(t,x))+\sum_{i=1}^{n}b_{i}(x)\partial_{x_{i}}u(t,x)=0\label{eq: problem equation 1}
\end{equation}
where $(a,b)$ satisfies that there exists a constant $\lambda>0$
such that 
\begin{equation}
\lambda\vert\xi\vert^{2}\leq\sum_{i,j=1}^{n}a_{ij}\xi_{i}\xi_{j}\leq\frac{1}{\lambda}\vert\xi\vert^{2}\tag*{E}\label{eq: 1. ellipticity}
\end{equation}
and $b$ is divergence-free 
\begin{equation}
\sum_{i=1}^{n}\partial_{x_{i}}b_{i}(x)=0.\tag*{S}\label{eq: 1. divergence free}
\end{equation}
It worth noting that without the divergence-free condition (\ref{eq: 1. divergence free}),
(\ref{eq: problem equation 1}) may not have a solution for super-critical
$b$ (see for example \cite[Example 1]{kinzebulatov2020} and \cite{zhang2020}).

The existence of weak solutions to (\ref{eq: problem equation 1})
under mild conditions on $b$ is easy to prove, while the uniqueness
is not trivial. Here we prove the uniqueness using the conservativeness
implied by Theorem \ref{thm: 1 upper bound}. To be more specific,
we construct a unique Markov semi-group $(P_{t})_{t\geq0}$ for $b\in L^{2}(\rn)\cap L^{q}(\rn)$,
$q>\frac{n}{2}$ and show that it has kernel $\Gamma$, which satisfies
the Chapman–Kolmogorov equation. Moreover, the semi-group is conservative,
i.e. 
\begin{equation}
P_{t}1(x)=\int_{\mathbb{R}^{n}}\Gamma(t,x,y)\ dy=1\label{eq:con1}
\end{equation}
for a.e. $x\in\rn$. Notice that in this case the corresponding bi-linear
form
\begin{equation}
\dir(u,v)=\int_{\rn}\left[\langle\nabla u,a\cdot\nabla v\rangle+(b\cdot\nabla u)v\right]\ dx\label{eq:bi-form}
\end{equation}
is not sectorial in general in the sense defined in \cite{fukushima2010dirichlet,ma2012introduction}.

We are now in a position to state the main result of this paper.
\begin{thm}
\label{thm: Main result}Suppose conditions (\ref{eq: 1. ellipticity}),
(\ref{eq: 1. divergence free}) hold and $b\in L^{2}(\rn)\cap L^{q}(\rn)$
for $q>\frac{n}{2}$. There is a unique conservative Markov semi-group
$(P_{t})_{t\geq0}$ on $L^{2}(\rn)$ associated with operator $\divg(a\cdot\nabla)-b\cdot\nabla$
and it has transition probability kernel $\Gamma(t,x,y)$ for $t>0$,
$x,y\in\rn$. Moreover, the uniqueness of weak solutions holds for
the Cauchy initial problem to (\ref{eq: problem equation 1}) and
the solution is given by 
\begin{equation}
u(t,x)=\int_{\rn}\Gamma(t,x,y)u_{0}(y)\ dy\label{eq: representation of the solution}
\end{equation}
for any initial data $u_{0}\in L^{2}(\rn)$.
\end{thm}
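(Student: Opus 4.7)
The plan is to construct $(P_{t})_{t\geq 0}$ by smooth approximation, to use the Aronson-type bound of Theorem~\ref{thm: 1 upper bound} for compactness, and to exploit the divergence-free hypothesis to identify the limit with the form $\dir$. I would first mollify the coefficients by a standard radial mollifier $\rho_{\epsilon}$ to obtain smooth $a^{\epsilon}=a*\rho_{\epsilon}$ and $b^{\epsilon}=b*\rho_{\epsilon}$. Since convolution commutes with $\partial_{i}$, the field $b^{\epsilon}$ remains divergence-free with $\Vert b^{\epsilon}\Vert_{L^{q}}\le\Vert b\Vert_{L^{q}}$, and $a^{\epsilon}$ keeps the ellipticity constant $\lambda$. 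For each $\epsilon$ the classical parabolic theory yields a smooth fundamental solution $\Gamma^{\epsilon}(t,x,y)$ and a conservative Markov semigroup $P_{t}^{\epsilon}f(x)=\int_{\rn}\Gamma^{\epsilon}(t,x,y)f(y)\,dy$. The identity $\int(b^{\epsilon}\cdot\nabla u)u\,dx=0$ gives $\frac{d}{dt}\Vert P_{t}^{\epsilon}f\Vert_{L^{2}}^{2}=-2\dir_{s}^{\epsilon}(P_{t}^{\epsilon}f,P_{t}^{\epsilon}f)\le 0$, and since $b$ is $t$-independent, $\Vert b^{\epsilon}\Vert_{L^{l}(0,T;L^{q})}\le T^{1/l}\Vert b\Vert_{L^{q}}$ is uniform in $\epsilon$, so Theorem~\ref{thm: 1 upper bound} supplies a Gaussian-type upper bound on $\Gamma^{\epsilon}$ with constants independent of $\epsilon$.

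Next I would pass to the limit. The uniform Gaussian majorant together with interior parabolic Hölder estimates (also uniform in $\epsilon$, because the drift lies in the subcritical range) yield, via an Arzela--Ascoli diagonal argument, a subsequence with $\Gamma^{\epsilon_{k}}\to\Gamma$ locally uniformly on $(0,\infty)\times\rn\times\rn$. Dominated convergence upgrades this to $P_{t}^{\epsilon_{k}}f\to P_{t}f$ in $L^{p}(\rn)$ for $p\in[1,\infty)$ whenever $f\in L^{2}\cap L^{\infty}$, and the operator identity $P_{t+s}^{\epsilon_{k}}=P_{t}^{\epsilon_{k}}P_{s}^{\epsilon_{k}}$ passes to the limit as a genuine Chapman--Kolmogorov equation for $\Gamma$. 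This is exactly the step where time-homogeneity buys what the weak-convergence argument in the non-autonomous setting could not; positivity, mass preservation and the Markov property all survive the passage to the limit.

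To identify $(P_{t})$ with $\dir$, I would pass to the limit in
\[
\int_{\rn}P_{t}^{\epsilon}f\,\varphi\,dx-\int_{\rn}f\varphi\,dx=-\int_{0}^{t}\dir^{\epsilon}(P_{s}^{\epsilon}f,\varphi)\,ds,\qquad\varphi\in C_{c}^{\infty}(\rn).
\]
The symmetric part converges because $a^{\epsilon}\to a$ a.e.\ and $\nabla P_{s}^{\epsilon}f\rightharpoonup\nabla P_{s}f$ in $L_{\mathrm{loc}}^{2}$ (from the energy inequality combined with the Gaussian bound). For the drift term, $q>n/2$ and Sobolev embedding allow $\int(b^{\epsilon}\cdot\nabla\varphi)P_{s}^{\epsilon}f\,dx\to\int(b\cdot\nabla\varphi)P_{s}f\,dx$ via $L^{q}$-convergence of $b^{\epsilon}$ and strong $L^{r}$-convergence of $P_{s}^{\epsilon}f$ at a suitable exponent. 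The resulting limit is independent of subsequence and of the mollification, giving uniqueness of $(P_{t})$. I expect this identification step to be the main obstacle, precisely because $\dir$ is not sectorial and the usual non-symmetric Dirichlet-form machinery is unavailable; one must use $\dir_{s}$ as the reference form and treat the drift as a perturbation whose relevant bilinear expressions are controlled by Sobolev inequality exactly under the hypothesis $q>n/2$.

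Finally, uniqueness of weak solutions is an energy argument: if $u\in L^{\infty}(0,T;L^{2})\cap L^{2}(0,T;H^{1})$ solves \eqref{eq: problem equation 1} with $u(0)=0$, testing by $u$ and using \ref{eq: 1. divergence free} to eliminate $\int(b\cdot\nabla u)u\,dx$ yields $\tfrac{1}{2}\Vert u(t)\Vert_{L^{2}}^{2}+\lambda\int_{0}^{t}\Vert\nabla u\Vert_{L^{2}}^{2}\,ds\le 0$, hence $u\equiv 0$. Since $P_{t}u_{0}$ produces a weak solution in this class for every $u_{0}\in L^{2}$, the representation \eqref{eq: representation of the solution} is forced.
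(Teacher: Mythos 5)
Your overall skeleton (smooth divergence-free approximation, Aronson-type bounds from Theorem \ref{thm: 1 upper bound} for compactness of the kernels, the divergence-free structure to identify the limit) matches the paper's, but two of your key steps do not go through as stated.

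First, the claim that interior parabolic H\"older estimates are ``uniform in $\epsilon$, because the drift lies in the subcritical range'' is false in the regime of the theorem: for $\frac{n}{2}<q<n$ the time-independent drift corresponds to $\frac{2}{l}+\frac{n}{q}=\frac{n}{q}\in(1,2)$, i.e.\ the supercritical range, which is precisely the point of the paper (the subcritical case is already covered by Ladyzhenskaya et al.). Without a uniform modulus of continuity your Arzel\`a--Ascoli argument for locally uniform convergence of $\Gamma^{\epsilon}$ collapses, and with it the claimed passage to the limit in the Chapman--Kolmogorov identity. The paper avoids this entirely: it works at the level of resolvents, proving a weighted $L^{2}$ tail estimate with logarithmic weight (Lemma \ref{lem: 3. elliptic case tail estimate}) uniform in $k$, deducing strong $L^{2}$-compactness of $(1-L_{k})^{-1}f$ via Fr\'echet--Kolmogorov (Lemma \ref{lem: 3. compactness lemma}), defining $L=1-S^{-1}$ from the limit resolvent, and invoking the Trotter--Kato theorem to get $e^{tL_{k}}\to e^{tL}$; the kernel $\Gamma(t,x,y)$ is then recovered only as a weak limit of the measures $\Gamma_{k}(t,x,y)\,dy$ via tightness (Proposition \ref{prop: tightness of finite dimensional}), which requires no equicontinuity.

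Second, your uniqueness argument tests the equation with $u$ itself and invokes $\int(b\cdot\nabla u)u\,dx=0$. For $u\in L^{2}(0,T;H^{1}(\rn))$ one only has $u\nabla u\in L^{n/(n-1)}$ by Sobolev embedding, so this integral is absolutely convergent only when $b\in L^{n}$; for $q<n$ the cancellation cannot be justified and $u$ is not an admissible test function. This is exactly the obstruction that forces the paper's two substitute arguments: for the resolvent equation, uniqueness is proved by testing with the truncation $\bar{u}=u\wedge N\vee(-N)$ times a cutoff (Proposition \ref{prop: uniqueness}), for which $b\cdot\nabla u\,\bar{u}$ is integrable since $\bar{u}$ is bounded and $b,\nabla u\in L^{2}$; and for the parabolic problem, uniqueness is obtained not by an energy identity but by showing that \emph{every} weak solution is the $L^{\infty}(0,T;L^{1}(\rn))$ limit of the approximating solutions $u_{k}$ (Proposition \ref{prop: approximation 2}), using a Duhamel representation for $u_{k}-u$ and the conservativity $\int_{\rn}\Gamma_{k}(t-\tau,x,\xi)\,dx=1$ of the dual kernel, which is where condition \ref{eq: 1. divergence free} enters. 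You would need to either restrict to $q\geq n$ (defeating the purpose) or replace both steps along these lines.
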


Here we assume that $b$ is divergence-free and $b\in L^{q}(\mathbb{R}^{n})$
for $q>\frac{n}{2}$ are primarily for proving the conservativeness
of the semi-group through Theorem \ref{thm: 1 upper bound}. The conservativeness
is crucial for proving the uniqueness (see also \cite{stannat1999nonsymmetric}).
When the dimension $n=3$, the condition of Theorem \ref{thm: Main result}
is satisfied if $b\in L^{2}(\rt)$.

In the past decades, there have been many researches on the semi-groups
for non-sectorial bi-linear forms (\ref{eq:bi-form}). The construction
of the semi-group is studied in, for example, \cite{kinzebulatov2020,kovalenko1991c_0,liskevich1996c_0,sobol2002lp,stannat1999nonsymmetric,trutnau2003skorokhod}.
The construction of the semi-groups in these papers all use approximation
arguments of different types. In \cite{stannat1999nonsymmetric},
Stannat shows the existence of sub-Markovian $C_{0}$-semi-group when
$b\in L_{loc}^{2}(\rn)$. Although $b\in L_{loc}^{2}(\rn)$ does not
guarantee uniqueness, Stannat proves the equivalence between uniqueness,
conservativeness and a characterization of the bi-linear form (see
\cite[Proposition 1.9 ]{stannat1999nonsymmetric}).

The proof of uniqueness and conservativeness is harder. One could
prove them using the heat kernel estimates \cite{Davies1992}. In
\cite{Zhikov-2006}, using the Aronson type heat kernel estimate,
Zhikov constructed the unique approximation semi-group to 
\begin{equation}
\partial_{t}u-\divg((A+B)\cdot\nabla u)=0,\label{eq: zhikov problem}
\end{equation}
for periodic $B\in L^{\infty}(\rn)$, $\divg B\in L_{loc}^{2}(\rn)$
and $\sup_{r\geq1}\frac{1}{r^{n}}\Vert B\Vert_{L^{n}(B(0,r))}^{n}<\infty$.
Here $A$ is symmetric matrix-valued and $B$ is anti-symmetric matrix-valued.
It is easy to see that such problems are equivalent to (\ref{eq: problem equation 1})
with divergence-free $b$ if we set $a=A$ and $b=-\divg B$. In \cite{liskevich2003estimates},
Liskevich and Sobol further proved the heat kernel estimate for the
semi-groups under additional functional conditions on the bi-linear
form using the idea developed in \cite{davies1987explicit}. To prove the conservativeness, another idea is to assume growth conditions on the coefficient,
analyze locally and then take limit to the whole space (see for example
\cite{GimTrutnau2017,OshimaUemura2016}).

Conservativeness of stochastic processes is also studied extensively
on manifolds. The closest conditions in literature to ensure the conservativeness
(also called stochastic completeness) for unbounded $b$ are those
on the symmetric tensor $\nabla^{s}b$ (Ricci curvature or Bakry-Émery
condition) while $\nabla\cdot b$ is the trace of $\nabla^{s}b$.
Hence our condition impose a constrain on the ``scalar curvature''
of the operator.

In this paper, in addition to divergence-free, we only make weak integrability
assumptions on $b$. For divergence-free $b\in L^{2}(\rn)\cap L^{q}(\rn)$
with $q>\frac{n}{2}$, we establish the existence of the Markov semi-group
associated with parabolic equation (\ref{eq: problem equation 1})
using approximation by smooth coefficients, which is inspired by the
ideas from \cite{zhikov2004remarks,Zhikov-2006}. Then, using the
conservativeness implied by the new Aronson type estimate in Theorem
\ref{thm: 1 upper bound}, we proved the uniqueness of the Markov
semi-group. Moreover, the unique semi-group gives the unique weak
solution to (\ref{eq: problem equation 1}) because we show that any
weak solution is an approximation solution.

Once constructed the semi-group, a natural question is the construction
of the corresponding diffusion process. Because the semi-group in
Theorem \ref{thm: Main result} is only defined on $L^{2}(\rn)$,
without regularity results, $\Gamma(t,x,y)$ is only unique a.e. on
$[0,\infty)\times\rn\times\rn$. Therefore, we can not define the
diffusion process for all initial data $x\in\rn$. By \cite[Theorem 3.5]{stannat1999nonsymmetric}
and \cite[Chapter IV]{stannat1999theory}, Stannat constructed the
diffusion in a weaker sense using the framework of generalized Dirichlet
forms, i.e. the process is defined for any initial data $x\in\rn$
except on a zero capacity set. For more details, we refer to \cite{stannat1999nonsymmetric}
and \cite{stannat1999theory}. The solution has local singularities
on a small set because of the local singularities of $b$. Therefore,
if we further assume $b$ to be more regular locally, then we can
define the diffusion process for any initial data $x\in\rn$. A classical
one is the local Lipschitz condition. Then we have the following result
by Theorem \ref{thm: Main result}. As a criterion for the conservativeness
of the diffusion processes, it is interesting by its own.
\begin{cor}
\label{Corollary: diffusion process}Let $b$ be a divergence-free
vector field in $\rn$ with $n\geq3$ that is locally Lipschitz and
$b\in L^{q}(\rn)\cap L^{2}(\rn)$ where $q>\frac{n}{2}$, then the
strong solution to
\[
dX_{t}=dB_{t}+b(X_{t})dt,\qquad X_{0}=x
\]
is conservative.
\end{cor}

Here the local Lipschitz condition can be replaced by other conditions
that implies the local existence and uniqueness of the strong solution
for any initial data $x\in\rn$, for example $b\in L_{loc}^{p}(\rn)$
for $p>n$ (see \cite{KrylovRockner2005}). Then we also have the
same result as stated in Corollary \ref{Corollary: diffusion process}.
It should be mentioned that for time-dependent super-critical $b$
with upper bound on $-\divg b$, Zhang and Zhao\cite{zhang2020} recently
proved the existence and weak uniqueness of the martingale solution
$\mathbb{P}_{s,x}$ for almost all $(s,x)$ except on a null set,
using a new maximum estimate proved in their work. In the special
case when $\divg b=0$, their result implies the conservative of the
solution.

This paper is organized as follows. In section 2, we prove the existence
of weak solution for $b\in L^{2}(\rn)$. In section 3, we give the
proof to Theorem \ref{thm: Main result} where the drift vector field
$b$ belongs to the function space $L^{2}(\rn)\cap L^{q}(\rn)$, $q>\frac{n}{2}$.

\section{Existence of weak solutions}

Firstly, we show the existence of weak solutions to (\ref{eq: problem equation 1})
when $(a,b)$ satisfies conditions (\ref{eq: 1. ellipticity}) and
(\ref{eq: 1. divergence free}) and $b\in L^{2}(\rn)$. A similar
result was proved in \cite{Zhikov-2006} for (\ref{eq: zhikov problem})
and here we borrow the same idea to deal with our case. Here we denote
by $\Gamma(t,x,\xi)$ the fundamental solution to (\ref{eq: problem equation 1}).
\begin{defn}
\label{def weal s}A function $u\in L^{\infty}(0,T;L^{2}(\rn))\cap L^{2}(0,T;H^{1}(\rn))$
is a weak solution to (\ref{eq: problem equation 1}) corresponding
to $(a,b)$ and initial data $u_{0}$ if 
\begin{align*}
\int_{0}^{T}\int_{\mathbb{R}^{n}}u(t,x)\partial_{t}\varphi(t,x)\;dxdt-\int_{0}^{T}\int_{\mathbb{R}^{n}}\langle a(x)\cdot\nabla u(t,x),\nabla\varphi(t,x)\rangle\;dxdt\\
-\int_{0}^{T}\int_{\mathbb{R}^{n}}\langle b(x),\nabla u(t,x)\rangle\varphi(t,x)\;dxdt=-\int_{\rn}u_{0}(x)\varphi(0,x)\ dx
\end{align*}
for any $\varphi\in C_{0}^{\infty}([0,T)\times\rn)$.
\end{defn}

When $b\in L^{q}(\rn)$ with $q\geq n$, for any initial data $u_{0}\in L^{2}(\rn)$,
there exists a unique weak solution $u$ satisfying that $\partial_{t}u\in L^{2}(0,T;H^{-1}(\rn))$
and $u\in C([0,T],L^{2}(\rn))$. Moreover, it satisfies the energy
identity

\begin{equation}
\frac{1}{2}\Vert u(T)\Vert_{2}^{2}+\int_{0}^{T}\int_{\mathbb{R}^{n}}\langle a(x)\cdot\nabla u(t,x),\nabla u(t,x)\rangle\;dxdt=\frac{1}{2}\Vert u_{0}\Vert_{2}^{2}.
\end{equation}
For more details, we refer to \cite{ladyzhenskaia1988linear}. When
$n\geq3$, $b\in L^{2}(\rn)$ we have the following results assuming
that $b$ is divergence-free.
\begin{prop}
Suppose conditions (\ref{eq: 1. ellipticity}) and (\ref{eq: 1. divergence free})
are satisfied and $b\in L^{2}(\rn)$, there exists a weak solution
to (\ref{eq: problem equation 1}) with initial data $u_{0}\in L^{2}(\rn)$.
\end{prop}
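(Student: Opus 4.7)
The plan is to approximate $b$ by divergence-free $b_{k}\in C_{0}^{\infty}(\rn)$ with $b_{k}\to b$ strongly in $L^{2}(\rn)$, as indicated in the introduction. For each $k$, since $b_{k}\in L^{q}(\rn)$ for every $q\geq n$, the classical theory cited from \cite{ladyzhenskaia1988linear} yields a weak solution $u_{k}\in L^{\infty}(0,T;L^{2}(\rn))\cap L^{2}(0,T;H^{1}(\rn))$ of (\ref{eq: problem equation 1}) with drift $b_{k}$ and initial data $u_{0}$, satisfying the energy identity displayed above the proposition. Combined with the ellipticity condition (\ref{eq: 1. ellipticity}), this identity gives uniform bounds on $\{u_{k}\}$ in $L^{\infty}(0,T;L^{2}(\rn))\cap L^{2}(0,T;H^{1}(\rn))$, so after extracting a subsequence (not relabeled), $u_{k}\rightharpoonup u$ weakly in $L^{2}(0,T;H^{1}(\rn))$ and weakly-$*$ in $L^{\infty}(0,T;L^{2}(\rn))$.

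The time-derivative term $\int_{0}^{T}\int_{\rn} u_{k}\partial_{t}\varphi$ and the principal term $\int_{0}^{T}\int_{\rn}\langle a\nabla u_{k},\nabla\varphi\rangle$ pass to the limit immediately via these weak convergences combined with the smoothness and compact support of the test function $\varphi$. The main obstacle is the drift term $\int_{0}^{T}\int_{\rn}\langle b_{k},\nabla u_{k}\rangle\varphi\,dxdt$, where $b_{k}\to b$ strongly in $L^{2}$ while $\nabla u_{k}\rightharpoonup\nabla u$ only weakly in $L^{2}$; a weak-strong pairing of the two factors as written is not available. The crucial step---where the divergence-free hypothesis (\ref{eq: 1. divergence free}) enters essentially---is to integrate by parts and transfer the derivative onto $\varphi$:
\begin{equation*}
\int_{0}^{T}\int_{\rn}\langle b_{k},\nabla u_{k}\rangle\varphi\,dxdt=-\int_{0}^{T}\int_{\rn} u_{k}\langle b_{k},\nabla\varphi\rangle\,dxdt,
\end{equation*}
which is valid since $\divg b_{k}=0$. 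Now $\langle b_{k},\nabla\varphi\rangle\to\langle b,\nabla\varphi\rangle$ strongly in $L^{2}((0,T)\times\rn)$ because $b_{k}\to b$ in $L^{2}(\rn)$ and $\nabla\varphi$ is bounded with compact support, while $u_{k}\rightharpoonup u$ weakly in $L^{2}((0,T)\times\rn)$, so the weak-strong pairing yields convergence of this term to $-\int_{0}^{T}\int_{\rn}u\langle b,\nabla\varphi\rangle\,dxdt$.

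To close the argument, I would reverse the integration by parts in the limit: since $u\in L^{2}(0,T;H^{1}(\rn))$, $b\in L^{2}(\rn)$, and $\divg b=0$ in the distributional sense, the identity $\int_{\rn} u\langle b,\nabla\varphi\rangle\,dx=-\int_{\rn}\langle b,\nabla u\rangle\varphi\,dx$ holds for almost every $t\in(0,T)$, identifying the drift-term limit with the required form $-\int_{0}^{T}\int_{\rn}\langle b,\nabla u\rangle\varphi\,dxdt$. Together with the initial-data contribution $-\int_{\rn} u_{0}\varphi(0,x)\,dx$, which is independent of $k$, this verifies the weak formulation for $u$. The essential point throughout is that $\divg b=0$ lets us trade a derivative on $u$ (where only weak convergence is available) for a derivative on $\varphi$ (which is smooth with compact support), which is exactly what makes the approximation argument work in the regime $b\in L^{2}$, below the classical threshold $b\in L^{n}$.
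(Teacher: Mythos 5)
Your proof is correct and follows the same approximation-plus-weak-compactness strategy as the paper, which likewise takes the solutions $u_{k}$ for smooth divergence-free $b_{k}\to b$ in $L^{2}$, uses the uniform energy bounds to extract a weakly convergent subsequence, and passes to the limit in the weak formulation. The only difference is in the treatment of the drift term, where your integration-by-parts detour is valid but not actually forced on you: the splitting $\langle b_{k},\nabla u_{k}\rangle\varphi=\langle b_{k}-b,\nabla u_{k}\rangle\varphi+\langle b,\nabla u_{k}\rangle\varphi$ also passes to the limit directly, since the first piece is controlled by $\Vert b_{k}-b\Vert_{L^{2}}$ times the uniform $L^{2}(0,T;H^{1})$ bound and the second pairs the fixed function $b\varphi\in L^{2}((0,T)\times\mathbb{R}^{n})$ with the weakly convergent $\nabla u_{k}$.
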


\begin{proof}
Denote by $u_{k}$ the weak solution corresponding to $(a,b_{k})$
with the same initial data $u_{0}$ as in Definition \ref{def weal s},
where $b_{k}\in C_{0}^{\infty}(\rn)$ are divergence-free and $b_{k}\rightarrow b$
in $L^{2}(\rn)$. Then $\{u_{k}\}$ is uniformly bounded in $L^{\infty}(0,T;L^{2}(\rn))\cap L^{2}(0,T;H^{1}(\rn))$
and has a sub-sequence which converges weakly to some $u$. This weak
convergence allows us to take $k\rightarrow\infty$ for
\begin{align*}
\int_{0}^{T}\int_{\mathbb{R}^{n}}u_{k}(t,x)\partial_{t}\varphi(t,x)\;dxdt-\int_{0}^{T}\int_{\mathbb{R}^{n}}\langle a(x)\cdot\nabla u_{k}(t,x),\nabla\varphi(t,x)\rangle\;dxdt\\
-\int_{0}^{T}\int_{\mathbb{R}^{n}}\langle b_{k}(x),\nabla u_{k}(t,x)\rangle\varphi(t,x)\;dxdt=-\int_{\rn}u_{0}(x)\varphi(0,x)\ dx
\end{align*}
to obtain that
\begin{align*}
\int_{0}^{T}\int_{\mathbb{R}^{n}}u(t,x)\partial_{t}\varphi(t,x)\;dxdt-\int_{0}^{T}\int_{\mathbb{R}^{n}}\langle a(x)\cdot\nabla u(t,x),\nabla\varphi(t,x)\rangle\;dxdt\\
-\int_{0}^{T}\int_{\mathbb{R}^{n}}\langle b(x),\nabla u(t,x)\rangle\varphi(t,x)\;dxdt=-\int_{\rn}u_{0}(x)\varphi(0,x)\ dx & .
\end{align*}
Hence the limit $u$ is a weak solution to the corresponding problem
with data $(a,b)$.
\end{proof}
We call the weak solution constructed in this way an approximation
solution. Next, we show that every weak solution is an approximation
solution in a weaker sense. This result follows from a similar argument
in \cite{zhang2004strong}.
\begin{prop}
\label{prop: approximation 2}Suppose $b\in L^{2}(\rn)$ and $b_{k}\in C_{0}^{\infty}(\rn)$
are divergence-free such that $b_{k}\rightarrow b$ in $L^{2}(\rn)$.
Let $u$ and $\{u_{k}\}$ be the weak solutions to (\ref{eq: problem equation 1})
on $[0,T]\times\rn$ with the same initial data $u_{0}$, the same
diffusion coefficient $a$, and drifts $b$ and $\{b_{k}\}$ respectively.
Then $u$ is the $L^{\infty}(0,T;L^{1}(\rn))$ limit of functions
$\{u_{k}\}$.
\end{prop}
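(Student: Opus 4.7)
The plan is to set $v_{k}=u-u_{k}$, derive the equation it satisfies, and control $\|v_{k}(t)\|_{L^{1}}$ by testing against a smooth approximation of $\operatorname{sgn}(v_{k})$; the divergence-free condition will kill the transport term on the left-hand side, while the forcing term on the right-hand side will be shown to tend to zero thanks to the energy estimate for $u_{k}$ and the convergence $b_{k}\to b$ in $L^{2}(\rn)$.

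More precisely, first I would observe that $v_{k}=u-u_{k}$ belongs to $L^{\infty}(0,T;L^{2}(\rn))\cap L^{2}(0,T;H^{1}(\rn))$ with $v_{k}(0)=0$ and satisfies, in the weak sense,
\[
\partial_{t}v_{k}-\divg(a\cdot\nabla v_{k})+b\cdot\nabla v_{k}=(b_{k}-b)\cdot\nabla u_{k}.
\]
The right-hand side lies in $L^{1}(0,T;L^{1}(\rn))$ since $b_{k}-b\in L^{2}$ and $\nabla u_{k}\in L^{2}(0,T;L^{2}(\rn))$. Next I would choose, for $\varepsilon>0$, a smooth odd approximation $\phi_{\varepsilon}$ of $\operatorname{sgn}$ with $|\phi_{\varepsilon}|\le 1$ and $\phi_{\varepsilon}'\ge 0$, together with the primitive $\Phi_{\varepsilon}(s)=\int_{0}^{s}\phi_{\varepsilon}(\tau)\,d\tau$, so that $\Phi_{\varepsilon}(s)\nearrow|s|$ as $\varepsilon\to 0$, and a spatial cut-off $\eta_{R}(x)=\eta(x/R)$ with $\eta\in C_{0}^{\infty}(\rn)$ equal to $1$ near the origin.

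Using $\phi_{\varepsilon}(v_{k})\eta_{R}$ as a test function (after a standard mollification in time which is justified because $\partial_{t}v_{k}\in L^{2}(0,T;H^{-1})$), I would obtain
\[
\int_{\rn}\Phi_{\varepsilon}(v_{k}(T))\eta_{R}\,dx+\int_{0}^{T}\!\!\int_{\rn}\phi_{\varepsilon}'(v_{k})\langle a\nabla v_{k},\nabla v_{k}\rangle\eta_{R}\,dxdt+I_{1}+I_{2}=I_{3},
\]
where $I_{1}=\int_{0}^{T}\!\!\int\langle a\nabla v_{k},\nabla\eta_{R}\rangle\phi_{\varepsilon}(v_{k})\,dxdt$, $I_{2}=\int_{0}^{T}\!\!\int(b\cdot\nabla v_{k})\phi_{\varepsilon}(v_{k})\eta_{R}\,dxdt$, and $I_{3}=\int_{0}^{T}\!\!\int(b_{k}-b)\cdot\nabla u_{k}\,\phi_{\varepsilon}(v_{k})\eta_{R}\,dxdt$. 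The key point is to rewrite $I_{2}=\int_{0}^{T}\!\!\int b\cdot\nabla\Phi_{\varepsilon}(v_{k})\eta_{R}\,dxdt=-\int_{0}^{T}\!\!\int\Phi_{\varepsilon}(v_{k})\,b\cdot\nabla\eta_{R}\,dxdt$, which vanishes as $R\to\infty$ by Cauchy--Schwarz because $|\Phi_{\varepsilon}(v_{k})|\le|v_{k}|\in L^{\infty}(0,T;L^{2})$ while $\|b\mathbf{1}_{\{R\le|x|\le 2R\}}\|_{L^{2}}\to 0$ and $\|\nabla\eta_{R}\|_{\infty}\le C/R$. The term $I_{1}$ vanishes in the same way using the ellipticity bound on $a$ and $\nabla v_{k}\in L^{2}(0,T;L^{2})$.

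After sending $R\to\infty$, dropping the non-negative diffusion term, and then letting $\varepsilon\to 0$ by dominated convergence (the integrand of $I_{3}$ is dominated by $|b_{k}-b||\nabla u_{k}|\in L^{1}$), I would arrive at
\[
\int_{\rn}|v_{k}(T)|\,dx\le\int_{0}^{T}\!\!\int_{\rn}|b_{k}-b|\,|\nabla u_{k}|\,dxdt\le\|b_{k}-b\|_{L^{2}(\rn)}\,\sqrt{T}\,\|\nabla u_{k}\|_{L^{2}(0,T;L^{2})}.
\]
The uniform energy identity for the smooth approximations gives $\|\nabla u_{k}\|_{L^{2}(0,T;L^{2})}\le C(\lambda)\|u_{0}\|_{L^{2}}$, so the right-hand side tends to zero uniformly in $T$ as $k\to\infty$, proving that $u_{k}\to u$ in $L^{\infty}(0,T;L^{1}(\rn))$.

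The main obstacle is the rigorous justification of the chain-rule/test-function step: $v_{k}$ only lies in $L^{2}(0,T;H^{1})$ with time derivative in $H^{-1}$, and $b$ is merely $L^{2}$, so care is needed to (i) mollify in time to legitimately use $\phi_{\varepsilon}(v_{k})\eta_{R}$ as a test function, and (ii) transfer the derivative off $v_{k}$ in $I_{2}$ using the divergence-free condition. Both can be handled by standard mollification arguments combined with the $L^{2}$ integrability of $b$ controlling the cut-off remainders, as sketched above.
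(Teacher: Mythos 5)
Your strategy is genuinely different from the paper's. You keep the rough drift $b$ in the homogeneous part of the equation for $v_{k}=u-u_{k}$ and run a Kruzhkov-type $L^{1}$ energy estimate with the test function $\phi_{\varepsilon}(v_{k})\eta_{R}$. The paper instead writes the difference equation with the \emph{smooth} drift in the homogeneous part, $\partial_{t}(u_{k}-u)-\divg(a\cdot\nabla(u_{k}-u))+b_{k}\cdot\nabla(u_{k}-u)=(b-b_{k})\cdot\nabla u$, represents $u_{k}-u$ by Duhamel's formula with the classical fundamental solution $\Gamma_{k}$, and uses that the adjoint equation is again of the same form (divergence-free drift), so $\int_{\rn}\Gamma_{k}(t-\tau,x,\xi)\,dx=1$; this immediately gives $\Vert u_{k}(t)-u(t)\Vert_{L^{1}}\leq\int_{0}^{t}\int_{\rn}\vert b-b_{k}\vert\vert\nabla u\vert$. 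Both routes end at essentially the same inequality (yours with $\nabla u_{k}$, theirs with $\nabla u$ on the right), and both right-hand sides tend to zero.

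The gap in your version sits exactly where you flag ``the main obstacle,'' and the justification you offer there is incorrect: $\partial_{t}v_{k}$ does \emph{not} belong to $L^{2}(0,T;H^{-1}(\rn))$ in general. With $b$ merely in $L^{2}(\rn)$ and $\nabla v_{k}\in L^{2}(0,T;L^{2})$, the term $b\cdot\nabla v_{k}$ lies only in $L^{2}(0,T;L^{1}(\rn))$, and $L^{1}(\rn)\not\subset H^{-1}(\rn)$ for $n\geq2$; the same applies to the forcing $(b_{k}-b)\cdot\nabla u_{k}$. Consequently the standard Lions--Magenes chain rule $\frac{d}{dt}\int\Phi_{\varepsilon}(v_{k})\eta_{R}=\langle\partial_{t}v_{k},\phi_{\varepsilon}(v_{k})\eta_{R}\rangle$ cannot be invoked as stated, and this is precisely the delicate renormalization step for parabolic equations with rough drift. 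The step is plausibly repairable --- the test function $\phi_{\varepsilon}(v_{k})\eta_{R}$ is bounded, so it pairs with the $L^{1}$-in-space terms, and a Steklov-averaging argument in time should close it --- but this must be carried out rather than asserted. The paper's choice of decomposition is designed to avoid exactly this: by putting $b_{k}$ rather than $b$ into the homogeneous operator, every estimate is performed on an equation with smooth coefficients, and the divergence-free hypothesis enters only through the conservativity of the adjoint kernel. Two smaller points: you need $\eta\geq0$ to discard the diffusion term, and the evaluation $\int\Phi_{\varepsilon}(v_{k}(0))\eta_{R}=0$ requires recovering the initial condition from the weak formulation, since the definition of weak solution used here does not include $C([0,T];L^{2})$ regularity for $u$.
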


\begin{proof}
Consider the Cauchy problem 
\[
\partial_{t}u_{k}-\divg(a\cdot\nabla u_{k})+b_{k}\cdot\nabla u_{k}=0
\]
with initial data $u_{k}(x,0)=u_{0}(x)$. Clearly $u_{k}-u$ is a
weak solution to 
\[
\partial_{t}(u_{k}-u)-\divg(a\cdot\nabla(u_{k}-u))+b_{k}\cdot\nabla(u_{k}-u)=(b-b_{k})\cdot\nabla u
\]
with $0$ as the initial value. By assumption, $\Vert(b-b_{k})\cdot\nabla u\Vert_{L^{2}(0,T;L^{1}(\rn))}\rightarrow0$
as $k\rightarrow\infty$. Since $b_{k}\in C_{0}^{\infty}(\rn)$, we
have a representation given by
\[
(u_{k}-u)(t,x)=\int_{0}^{t}\int_{\rn}\Gamma_{k}(t-\tau,x,\xi)(b-b_{k})\cdot\nabla u(\xi,\tau)\ d\xi d\tau,
\]
where $\Gamma_{k}$ is the fundamental solution corresponding to $b_{k}$.
Then $\Gamma_{k}^{*}(t,\xi,x):=\Gamma_{k}(t,x,\xi)$ is the fundamental
solution to $(\partial_{t}-L_{k}^{*})u=0$, which is of the same form
as the original equation (\ref{eq: problem equation 1}) up to a sign
on the drift. Hence
\begin{equation}
\int_{\rn}\Gamma_{k}(t-\tau,x,\xi)\ dx=1\label{eq: conservative of the dual}
\end{equation}
for any fixed $(t-\tau,\xi)$. This implies that
\[
\int_{\rn}\vert u_{k}-u\vert(t,x)\ dx\leq\int_{0}^{t}\int_{\rn}\vert b-b_{k}\vert\vert\nabla u\vert\ d\xi d\tau\rightarrow0
\]
and the proof is done.
\end{proof}
The proposition above implies that all weak solutions are approximation
solutions. Here the divergence-free condition is the key to have the
dual operator being conservative to obtain (\ref{eq: conservative of the dual}).

\section{Uniqueness of the approximation semi-group and its kernel}

In this section we prove our main result Theorem \ref{thm: Main result}.
The idea is to construct a unique approximation Markov semi-group
corresponding to generator $L=\divg(a\cdot\nabla)-b\cdot\nabla$.
Since $a$ is only Borel measurable, the generator $L$ is not well
defined as a differential operator. Hence we will construct $L$ in
the following, while we still use formal expression $L=\divg(a\cdot\nabla)-b\cdot\nabla$
for simplicity of notation. We start with the bi-linear form 
\[
\dir(u,v)=\int_{\rn}\langle\nabla u,a\cdot\nabla v\rangle+(b\cdot\nabla u)v\ dx.
\]
Naturally we consider the elliptic problem and its weak solutions,
which is standard in literature.
\begin{defn}
Let $(a,b)$ satisfies (\ref{eq: 1. ellipticity}), (\ref{eq: 1. divergence free})
and $b\in L^{2}(\rn)$. For $f\in L^{2}(\rn)$, if there exists a
$u\in H^{1}(\rn)$ such that 
\[
\int_{\rn}\langle\nabla u,a\cdot\nabla\varphi\rangle+(b\cdot\nabla u)\varphi+\alpha u\varphi\ dx=\int_{\rn}f\varphi\ dx
\]
for all $\varphi\in C_{0}^{\infty}(\rn)$, we call $u$ a weak solution
to the elliptic problem $(\alpha-L,f)$, where $\alpha\geq0$.
\end{defn}

For $b\in C_{0}^{\infty}(\rn)$, the bi-linear form is actually a
Dirichlet form. We recall the following result on Dirichlet forms
in \cite[Chapter 1]{ma2012introduction}.
\begin{thm}
\label{thm: Dirichlet form}Suppose $(a,b)$ satisfies (\ref{eq: 1. ellipticity}),
(\ref{eq: 1. divergence free}) and $b\in C_{0}^{\infty}(\rn)$, then
$\left(\mathcal{E},H^{1}(\mathbb{R}^{n})\right)$, where 
\[
\dir(u,v)=\int_{\rn}\langle\nabla u,a\cdot\nabla v\rangle+(b\cdot\nabla u)v\ dx
\]
with $u,v\in H^{1}(\rn)$, is a (non-symmetric) Dirichlet form. We
still use $L$ together with its domain $D(L)$ to denote the generator
associated with the Dirichlet form $\left(\mathcal{E},H^{1}(\mathbb{R}^{n})\right)$.
The resolvent $R_{\alpha}=(\alpha-L)^{-1}$ for $\alpha>0$ is a bounded
linear operator from $L^{2}(\rn)$ to $L^{2}(\rn)$ with $\Vert(\alpha-L)^{-1}\Vert_{L^{2}\rightarrow L^{2}}\leq\alpha^{-1}$,
and it satisfies that
\begin{equation}
\dir(R_{\alpha}f,v)+\alpha\int_{\rn}(R_{\alpha}f)v\ dx=\int_{\rn}fv\ dx.\label{eq: resolvent bilinear form}
\end{equation}
\end{thm}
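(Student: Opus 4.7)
The plan is to verify the hypotheses needed to invoke the general theory of non-symmetric Dirichlet forms in Ma--Röckner, after which all conclusions (in particular existence and boundedness of the resolvent and the identity \eqref{eq: resolvent bilinear form}) follow mechanically. First I would decompose $\dir$ into its symmetric and antisymmetric parts. By the ellipticity condition \ref{eq: 1. ellipticity}, $\dir_s(u,v) = \int \langle \nabla u, a\cdot \nabla v\rangle\,dx$ is the standard symmetric Dirichlet form on $H^1(\rn)$, closed and coercive. The drift term contributes an antisymmetric piece: for $u,v\in H^1(\rn)$, integration by parts combined with $\nabla\cdot b=0$ gives
\[
\int_{\rn}(b\cdot\nabla u)v\,dx=-\int_{\rn}(b\cdot\nabla v)u\,dx,
\]
so on the diagonal $\int(b\cdot\nabla u)u\,dx=0$ (apply the identity to $v=u$, or note that $b\cdot\nabla(u^{2})=2(b\cdot\nabla u)u$ integrates to zero). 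In particular $\dir(u,u)=\dir_s(u,u)\geq \lambda\|\nabla u\|_2^2$, so $\dir_1:=\dir+(\cdot,\cdot)$ is coercive with $\dir_1(u,u)\geq \lambda\|\nabla u\|_2^2+\|u\|_2^2$, and the norms induced by $\dir_1$ and by $H^1(\rn)$ are equivalent, hence $(\dir,H^{1}(\rn))$ is closed.

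Next I would check the weak sector condition. Since $b\in C_{0}^{\infty}(\rn)$ is bounded,
\[
\left|\int_{\rn}(b\cdot\nabla u)v\,dx\right|\leq \|b\|_{\infty}\|\nabla u\|_2\|v\|_2\leq C\,\dir_1(u,u)^{1/2}\dir_1(v,v)^{1/2},
\]
and combined with Cauchy--Schwarz on $\dir_s$ this gives $|\dir_1(u,v)|\leq K\,\dir_1(u,u)^{1/2}\dir_1(v,v)^{1/2}$ for some constant $K$. The adjoint form $\hat{\dir}(u,v):=\dir(v,u)$ has, by the divergence-free integration by parts above, exactly the same structure with $b$ replaced by $-b$, so all of the estimates apply symmetrically.

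The heart of the argument is the unit-contraction (Markov) property for both $\dir$ and $\hat{\dir}$. Following the standard device, for $\varepsilon>0$ one picks a smooth $\phi_{\varepsilon}:\mathbb{R}\to\mathbb{R}$ with $\phi_{\varepsilon}(t)=t$ on $[0,1]$, $-\varepsilon\leq \phi_{\varepsilon}\leq 1+\varepsilon$, and $0\leq \phi_{\varepsilon}'\leq 1$. For $u\in H^{1}(\rn)$, the chain rule gives $\phi_{\varepsilon}(u)\in H^{1}(\rn)$ and one needs $\dir(u+\phi_{\varepsilon}(u),u-\phi_{\varepsilon}(u))\geq 0$. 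The symmetric contribution is handled by the well-known Markov property of $\dir_s$. For the drift contribution, expand
\[
\int_{\rn}b\cdot\nabla(u+\phi_{\varepsilon}(u))\,(u-\phi_{\varepsilon}(u))\,dx
=\int_{\rn}b\cdot\nabla\bigl[F(u)+G(u)\bigr]\,dx,
\]
where $F,G$ are primitives of $t\mapsto t-\phi_{\varepsilon}(t)$ and $t\mapsto \phi_{\varepsilon}'(t)(t-\phi_{\varepsilon}(t))$ respectively; since $b$ has compact support and $\nabla\cdot b=0$, integration by parts annihilates this term. The same computation, with $b$ replaced by $-b$, handles $\hat{\dir}$. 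Thus both $\dir$ and $\hat{\dir}$ satisfy the unit-contraction property in the sense of \cite{ma2012introduction}, so $(\dir,H^{1}(\rn))$ is a (non-symmetric) Dirichlet form.

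Finally, since $\dir_1$ is a bounded coercive bilinear form on $H^{1}(\rn)$, the Lax--Milgram theorem produces a continuous bijection $\dir_{\alpha}\mapsto L^{2}(\rn)^{*}\cong L^{2}(\rn)$ for every $\alpha>0$; the inverse restricted to $L^{2}(\rn)$ is the resolvent $R_{\alpha}$, and testing \eqref{eq: resolvent bilinear form} against $v=R_{\alpha}f$ together with the diagonal vanishing of the drift gives $\alpha\|R_{\alpha}f\|_2^2\leq \dir(R_{\alpha}f,R_{\alpha}f)+\alpha\|R_{\alpha}f\|_2^2=(f,R_{\alpha}f)\leq \|f\|_2\|R_{\alpha}f\|_2$, whence $\|R_{\alpha}\|_{L^2\to L^2}\leq 1/\alpha$. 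The generator $L$ is then defined by $D(L):=R_{\alpha}(L^{2})$, $L:=\alpha-R_{\alpha}^{-1}$, independently of $\alpha$. The main obstacle, in my view, is keeping track of the correct notion of unit-contraction for the non-symmetric form and verifying it for \emph{both} $\dir$ and its adjoint; once the divergence-free identity is used twice, everything reduces to standard material.
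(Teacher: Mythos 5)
Your proof is correct and takes essentially the same route as the paper: the paper simply recalls this statement from Ma--R\"ockner \cite[Chapter 1]{ma2012introduction} without proof, and your argument is exactly the standard verification of the hypotheses of that theory (closedness and coercivity from \ref{eq: 1. ellipticity}, the weak sector condition from $b\in L^{\infty}$, the unit-contraction property for both $\dir$ and $\hat{\dir}$ via the divergence-free integration by parts, and Lax--Milgram plus the diagonal vanishing of the drift for the resolvent bound). The only cosmetic slip is the phrase ``continuous bijection $\dir_{\alpha}\mapsto L^{2}(\rn)^{*}$'', which should say that $\dir_{\alpha}$ induces a bijection of $H^{1}(\rn)$ onto its dual whose inverse, restricted to $L^{2}(\rn)\subset(H^{1}(\rn))^{*}$, is $R_{\alpha}$; the substance is unaffected.
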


Thus for $b\in C_{0}^{\infty}(\rn)$, $\divg(a\cdot\nabla)-b\cdot\nabla$
is understood as the generator $L$ defined as in Theorem \ref{thm: Dirichlet form}
above. Clearly, for any $f\in L^{2}(\rn)$, $(\alpha-L)^{-1}f$ is
the unique weak solution to $(\alpha-L,f)$. We can take $v=(\alpha-L)^{-1}f$
and derive that 
\begin{equation}
\Vert(\alpha-L)^{-1}f\Vert_{H^{1}}\leq\frac{1}{\min\{\lambda,\alpha\}}\Vert f\Vert_{L^{2}}\qquad\mbox{and }\qquad\Vert(\alpha-L)^{-1}f\Vert_{L^{2}}\leq\frac{1}{\alpha}\Vert f\Vert_{L^{2}}\label{eq: elliptic estimate}
\end{equation}
for all $\alpha>0$ and $f\in L^{2}(\rn)$. Next we prove the following
estimate on $R_{\alpha}$, which follows from \cite{Zhikov-2006},
plays an important role in proving our main result.
\begin{lem}
\label{lem: 3. elliptic case tail estimate}Suppose $b\in C_{0}^{\infty}(\rn)$
and $L$ as in Theorem \ref{thm: Dirichlet form}, set $u=(1-L)^{-1}f$
for $f\in C_{0}^{\infty}(\rn)$. Then for $n\geq3$, we have 
\[
\int_{\rn}\left[\ln(\vert x\vert^{2}+e)\right]^{2\gamma}u^{2}(x)\ dx\leq C_{0}\int_{\rn}\left[\ln(\vert x\vert^{2}+e)\right]^{2\gamma}f^{2}(x)\ dx
\]
with sufficiently small positive $\gamma$ and constant $C_{0}$ depending
only on $n$, $\lambda$, $\gamma$ and $\Vert b\Vert_{L^{q}(\rn)}$
with $q>\frac{n}{2}$.
\end{lem}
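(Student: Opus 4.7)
The plan is to test the resolvent identity $\dir(u,\varphi) + (u,\varphi) = (f,\varphi)$ against $\varphi = \eta^{2}u$ with the weight $\eta(x) = [\ln(|x|^{2}+e)]^{\gamma}$. Since $\eta$ has only polylogarithmic growth and $u \in H^{1}(\rn)$, $\eta^{2}u \in H^{1}(\rn)$ (after a standard truncation/approximation) and is an admissible test function. Expanding the diffusion term gives a good term $\int \eta^{2}\langle a\nabla u,\nabla u\rangle$ plus a cross term $2\int \eta u\langle a\nabla u,\nabla \eta\rangle$; for the drift term, integration by parts combined with $\divg b = 0$ yields
\[
\int(b\cdot\nabla u)\eta^{2}u\,dx = \tfrac{1}{2}\int \eta^{2}\,b\cdot\nabla(u^{2})\,dx = -\int u^{2}\eta\,(b\cdot\nabla\eta)\,dx.
\]
Applying ellipticity and Cauchy--Schwarz on the cross term (with parameter $\tfrac{1}{2}$) produces the energy-type inequality
\[
\tfrac{\lambda}{2}\int \eta^{2}|\nabla u|^{2}\,dx + \tfrac{1}{2}\int \eta^{2}u^{2}\,dx \leq \tfrac{1}{2}\int \eta^{2}f^{2}\,dx + \tfrac{C}{\lambda}\int u^{2}|\nabla\eta|^{2}\,dx + \int u^{2}\eta|b||\nabla\eta|\,dx.
\]

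The crucial pointwise estimate, arising from a direct computation, is
\[
|\nabla\eta| \leq \frac{C\gamma\,\eta(x)}{(|x|+1)\ln(|x|^{2}+e)}, \qquad |\nabla\eta|^{2}\leq\frac{C\gamma^{2}\,\eta^{2}(x)}{(1+|x|^{2})\ln^{2}(|x|^{2}+e)}.
\]
Writing $v = \eta u \in H^{1}(\rn)$, I would control the first error term by Hardy's inequality, which for $n \geq 3$ gives $\int v^{2}/(1+|x|^{2})\,dx \leq C\int|\nabla v|^{2}\,dx$; combined with $|\nabla v|^{2} \leq 2\eta^{2}|\nabla u|^{2} + 2u^{2}|\nabla\eta|^{2}$ and the pointwise bound, this yields
\[
\int u^{2}|\nabla\eta|^{2}\,dx \leq C\gamma^{2}\!\int \eta^{2}|\nabla u|^{2}\,dx + C\gamma^{2}\!\int u^{2}|\nabla\eta|^{2}\,dx,
\]
which for $\gamma$ small can be absorbed to give $\int u^{2}|\nabla\eta|^{2}\,dx \leq C\gamma^{2}\int \eta^{2}|\nabla u|^{2}\,dx$; this in turn absorbs into the leading $\tfrac{\lambda}{2}\int \eta^{2}|\nabla u|^{2}$ on the left once $\gamma$ is small relative to $\lambda$.

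For the drift term, Hölder's inequality with exponents $q$ and $q' = q/(q-1)$ and the pointwise bound on $|\nabla\eta|$ give
\[
\int u^{2}\eta|b||\nabla\eta|\,dx \leq C\gamma\|b\|_{L^{q}(\rn)}\,\|v\|_{L^{2q'}(\rn)}^{2}.
\]
Here the hypothesis $q > n/2$ is used exactly to ensure $2q' < 2^{*} = 2n/(n-2)$, so Gagliardo--Nirenberg/Sobolev yields $\|v\|_{L^{2q'}}^{2} \leq C\|v\|_{L^{2}}^{2\alpha}\|\nabla v\|_{L^{2}}^{2(1-\alpha)}$ for some $\alpha \in (0,1)$ depending on $n,q$. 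Young's inequality with a small parameter $\epsilon$ converts this to $\epsilon\|\nabla v\|_{L^{2}}^{2} + C_{\epsilon}\|v\|_{L^{2}}^{2}$; expanding $|\nabla v|^{2}$ as above and bookkeeping, the $\|\nabla v\|_{L^{2}}^{2}$ contribution is absorbed into $\tfrac{\lambda}{2}\int\eta^{2}|\nabla u|^{2}$ (by choosing $\epsilon$ small enough, depending on $\gamma\|b\|_{L^{q}}$), while the $\|v\|_{L^{2}}^{2} = \int\eta^{2}u^{2}$ contribution is absorbed into $\tfrac{1}{2}\int\eta^{2}u^{2}$ on the left by choosing $\gamma$ sufficiently small (depending on $n,\lambda,\|b\|_{L^{q}}$). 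Combining the estimates and applying $\int f\eta^{2}u \leq \tfrac{1}{4}\int\eta^{2}u^{2} + C\int\eta^{2}f^{2}$ produces the desired bound.

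The principal difficulty lies in the joint absorption step: the constants produced by Hardy, Sobolev--Gagliardo--Nirenberg, and Young's inequality must all be compatible with the single smallness parameter $\gamma$ (together with an auxiliary parameter $\epsilon$). In particular, the $b$-term supplies an unavoidable factor $C_{\epsilon}\|b\|_{L^{q}}$ which can only be controlled by balancing it against $\gamma$; the proof hinges on the clean linear $\gamma$-factor that arises from the pointwise bound on $|\nabla\eta|$, and on the strict inequality $q > n/2$ which gives strict sub-criticality $2q' < 2^{*}$ and hence interpolation room.
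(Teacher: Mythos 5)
Your proposal is correct and is essentially the paper's own argument: testing against $\eta^{2}u$ with $\eta=[\ln(|x|^{2}+e)]^{\gamma}$ is algebraically the same as the paper's conjugation $v=e^{\gamma\psi_{0}}u$, $\psi_{0}=\ln\ln(|x|^{2}+e)$, and the drift term is handled identically via H\"older in $L^{q}$ plus Gagliardo--Nirenberg interpolation (the paper's exponent $\theta=\tfrac{n}{q}-1<1$ is exactly your subcriticality $2q'<2^{*}$), followed by absorption for small $\gamma$. The only cosmetic difference is your use of Hardy's inequality for the $\int u^{2}|\nabla\eta|^{2}$ term, which is unnecessary since $|\nabla\eta|\leq C\gamma\,\eta$ pointwise already lets that term be absorbed into $\int\eta^{2}u^{2}$ directly, as the paper does by noting $|\nabla\psi_{0}|$ is bounded.
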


\begin{proof}
Let $\psi=\gamma\psi_{0}$, $\psi_{0}=\ln\ln(\vert x\vert^{2}+e)$,
for $\gamma>0$, and consider the operator $L_{\psi}=e^{\psi}Le^{-\psi}.$
For $v=e^{\psi}u$, we have $L_{\psi}v-v=g=e^{\psi}f$ and 
\[
\int_{\rn}-\langle\nabla(e^{\psi}v),a\cdot\nabla(e^{-\psi}v)\rangle-b\cdot\nabla(e^{-\psi}v)e^{\psi}v-v^{2}\ dx=\int_{\rn}gv\ dx.
\]
It follows, together with (\ref{eq: 1. ellipticity}) and (\ref{eq: 1. divergence free}),
that
\[
\int_{\rn}\lambda\vert\nabla v\vert^{2}-\frac{1}{\lambda}\gamma^{2}\vert\nabla\psi_{0}\vert^{2}v^{2}-\gamma(b\cdot\nabla\psi_{0})v^{2}+v^{2}\ dx\leq-\int_{\rn}gv\ dx.
\]
Notice that 
\[
\vert\nabla\psi_{0}\vert\leq\frac{2\vert x\vert}{(\vert x\vert^{2}+e)\ln(\vert x\vert^{2}+e)},
\]
which is bounded. Hence we have 
\begin{align*}
\int_{\rn}(b\cdot\nabla\psi_{0})v^{2}\ dx & \leq C\Vert b\Vert_{L^{q}}\Vert\nabla\psi_{0}\Vert_{L^{\infty}}\Vert v\Vert_{L^{2}}^{1-\theta}\Vert\nabla v\Vert_{L^{2}}^{1+\theta}\\
 & \leq C\Vert b\Vert_{L^{q}}\Vert\nabla\psi_{0}\Vert_{L^{\infty}}C(\theta)\left(\Vert v\Vert_{L^{2}}^{2}+\Vert\nabla v\Vert_{L^{2}}^{2}\right)
\end{align*}
where $\theta=\frac{n}{q}-1$ and $C$ depends on $n,q$. Now we can
take $\gamma$ small enough such that $\Vert v\Vert_{L^{2}}\leq C_{0}\Vert g\Vert_{L^{2}}$
and the proof is complete.
\end{proof}
Given divergence-free $b\in L^{q}(\rn)\cap L^{2}(\rn)$ and a sequence
of smooth functions $b_{k}\rightarrow b$ in $L^{q}(\rn)\cap L^{2}(\rn)$,
Lemma \ref{lem: 3. elliptic case tail estimate} implies that for
each fixed $f\in C_{0}^{\infty}(\rn)$, $L^{2}$-norm of the approximate
sequence $\left\{ (1-L_{k})^{-1}f\right\} $ will be uniformly concentrated
on some balls. Now we can prove the compactness of $\left\{ (1-L_{k})^{-1}f\right\} $.
\begin{lem}
\label{lem: 3. compactness lemma}Given divergence-free $b\in L^{q}(\rn)\cap L^{2}(\rn)$,
smooth approximations $b_{k}\rightarrow b$ in space $L^{q}(\rn)\cap L^{2}(\rn)$,
and $f\in L^{2}(\rn)$, the sequence $\{(1-L_{k})^{-1}f\}$ is strongly
compact in $L^{2}(\rn)$ and weakly compact in $H^{1}(\rn)$.
\end{lem}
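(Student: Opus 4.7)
The plan is to show weak $H^1$-compactness by a direct a priori bound, and to bootstrap this to strong $L^2$-compactness by combining Rellich--Kondrachov on balls with a uniform spatial-tail estimate supplied by Lemma \ref{lem: 3. elliptic case tail estimate}. Throughout I write $u_k := (1-L_k)^{-1} f$.

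\emph{Step 1 (weak $H^1$-compactness).} The elliptic estimate (\ref{eq: elliptic estimate}) applied with $\alpha = 1$ gives $\|u_k\|_{H^1} \leq (\min\{\lambda,1\})^{-1} \|f\|_{L^2}$, a bound that does not depend on $k$. Since $H^1(\mathbb{R}^n)$ is reflexive, any subsequence of $\{u_k\}$ has a further subsequence converging weakly in $H^1(\mathbb{R}^n)$, which proves the second assertion.

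\emph{Step 2 (strong $L^2$-compactness for $f \in C_0^\infty$).} Because $b_k \to b$ in $L^q(\mathbb{R}^n)$, the norms $\|b_k\|_{L^q}$ are uniformly bounded, so the constant $C_0$ in Lemma \ref{lem: 3. elliptic case tail estimate} can be chosen independent of $k$. Consequently
\[
\int_{\mathbb{R}^n} [\ln(|x|^2+e)]^{2\gamma} u_k^2(x)\,dx \;\leq\; C_0 \int_{\mathbb{R}^n} [\ln(|x|^2+e)]^{2\gamma} f^2(x)\,dx \;=:\; M<\infty,
\]
whence $\int_{|x|>R} u_k^2\,dx \leq M\,[\ln(R^2+e)]^{-2\gamma}$ tends to zero as $R\to\infty$, uniformly in $k$. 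On each ball $B_R$, the uniform $H^1$ bound from Step 1 together with the Rellich--Kondrachov compact embedding $H^1(B_R) \hookrightarrow L^2(B_R)$ shows that $\{u_k|_{B_R}\}$ is precompact in $L^2(B_R)$. A standard diagonal extraction over a sequence $R_m \to \infty$, combined with the uniform tail decay, produces a subsequence that is Cauchy in $L^2(\mathbb{R}^n)$.

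\emph{Step 3 (density extension to $f \in L^2$).} For general $f$, approximate by $f_m \in C_0^\infty(\mathbb{R}^n)$ with $\|f-f_m\|_{L^2} < 1/m$. The $L^2$-contractivity $\|(1-L_k)^{-1}\|_{L^2\to L^2} \leq 1$ from Theorem \ref{thm: Dirichlet form} yields
\[
\|(1-L_k)^{-1} f - (1-L_k)^{-1} f_m\|_{L^2} \;<\; \frac{1}{m} \qquad \text{uniformly in } k.
\]
Step 2 ensures that for each $m$ the set $\{(1-L_k)^{-1} f_m\}_k$ is precompact in $L^2(\mathbb{R}^n)$, hence totally bounded. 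The preceding uniform inequality then shows that $\{u_k\}_k$ is within $1/m$ of a totally bounded set for every $m$, so it too is totally bounded and hence precompact in $L^2(\mathbb{R}^n)$.

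\emph{Main obstacle.} The only delicate point is justifying that the weighted estimate of Lemma \ref{lem: 3. elliptic case tail estimate} holds with a constant independent of $k$; this is where the hypothesis $b_k \to b$ in $L^q(\mathbb{R}^n)$ with $q>n/2$ (rather than merely in $L^2$) is essential, since it controls the term $\int (b_k \cdot \nabla \psi_0)v^2\,dx$ uniformly. Once uniformity of $C_0$ is secured, the rest is a routine Rellich + tail-decay compactness argument.
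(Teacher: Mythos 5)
Your proof is correct and follows essentially the same route as the paper: a uniform $H^{1}$ bound for weak compactness, the weighted estimate of Lemma \ref{lem: 3. elliptic case tail estimate} (with constant uniform in $k$ since $\Vert b_{k}\Vert_{L^{q}}$ is bounded) for uniform tail decay, and a reduction to a dense set of $f$ via the uniform contractivity $\Vert(1-L_{k})^{-1}\Vert_{L^{2}\rightarrow L^{2}}\leq1$. The only cosmetic difference is that you unpack the paper's appeal to the Fréchet--Kolmogorov theorem into an explicit Rellich--Kondrachov-on-balls plus diagonal-extraction argument, and you make the density step explicit as a total-boundedness argument.
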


\begin{proof}
By $\Vert(1-L_{k})^{-1}f\Vert_{H^{1}}\leq\frac{1}{\min\{\lambda,1\}}\Vert f\Vert_{L^{2}}$,
we have that the sequence $\left\{ (1-L_{k})^{-1}f\right\} $ is weakly
compact in $H^{1}(\rn)$. To prove the strong compactness in $L^{2}(\rn)$,
recall that we have proved $\Vert(1-L_{k})^{-1}\Vert_{L^{2}\rightarrow L^{2}}\leq1$
for all $k$. Since the convergence of bounded linear operators is
determined by its convergence on a dense subset (see Theorem 6 in
\cite[Ch15]{lax2002functional}), it is sufficient to establish the
compactness of $\{(1-L_{k})^{-1}f\}$ for $f$ in a dense subset of
$L^{2}(\rn)$. For $f\in C_{0}^{\infty}(\rn)$, by Lemma \ref{lem: 3. elliptic case tail estimate}
and inequality $\Vert(1-L_{k})^{-1}f\Vert_{H^{1}}\leq\frac{1}{\min\{\lambda,1\}}\Vert f\Vert_{L^{2}}$,
the compactness of $\{(1-L_{k})^{-1}f\}$ in $L^{2}(\rn)$ follows
from the Fréchet–Kolmogorov theorem \cite[Chapter X, Section 1]{yosida1980functional}.
\end{proof}
Lemma \ref{lem: 3. compactness lemma} above allows us to take $k\rightarrow\infty$
and define the generator $L$ for singular $b$ as the limit of $L_{k}$.
\begin{lem}
\label{lem: convergence of resolvent}Given $L_{k}$ defined as in
Theorem \ref{thm: Dirichlet form} corresponding to $b_{k}$ which
converges to $b$ in $L^{q}(\rn)\cap L^{2}(\rn)$, after a possible
selection of a sub-sequence (denoted as $L_{k}$ again), there exists
a closed operator $L$ defined on a dense subset of $L^{2}(\rn)$
such that $\Vert(\alpha-L)^{-1}\Vert_{L^{2}\rightarrow L^{2}}\leq\alpha^{-1}$
for all $\alpha>0$ and
\[
(\alpha-L_{k})^{-1}f\rightarrow(\alpha-L)^{-1}f\qquad\mbox{ in }L^{2}(\rn)
\]
 for all $\alpha>0$ and $f\in L^{2}(\rn)$.
\end{lem}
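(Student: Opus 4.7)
The plan is to pass to the limit at $\alpha=1$ using Lemma \ref{lem: 3. compactness lemma} via a diagonal extraction, identify the limit as the resolvent $(1-L)^{-1}$ of a closed densely defined operator $L$ by establishing both injectivity of the limit operator and density of its range, and then extend the convergence to all $\alpha>0$ by a Neumann expansion.

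First, fix a countable dense subset $\{f_{m}\}\subset L^{2}(\rn)$. Lemma \ref{lem: 3. compactness lemma} together with a diagonal argument furnishes a subsequence (still indexed by $k$) along which $(1-L_{k})^{-1}f_{m}$ converges in $L^{2}(\rn)$ for every $m$. Since $\Vert(1-L_{k})^{-1}\Vert_{L^{2}\to L^{2}}\leq 1$ uniformly in $k$, the convergence extends by an $\varepsilon/3$ argument to every $f\in L^{2}(\rn)$, defining a bounded operator $R:L^{2}\to L^{2}$ with $\Vert R\Vert\leq 1$.

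The main step is to show that $R$ is injective. Suppose $Rf=0$ and write $u_{k}=(1-L_{k})^{-1}f$, so $u_{k}\to 0$ strongly in $L^{2}$. Choosing $v=u_{k}$ in (\ref{eq: resolvent bilinear form}) and using condition (\ref{eq: 1. divergence free}) to kill $\int(b_{k}\cdot\nabla u_{k})u_{k}\,dx$ yields the uniform bound $\Vert u_{k}\Vert_{H^{1}(\rn)}\leq C\Vert f\Vert_{L^{2}}$; a further subsequence therefore satisfies $u_{k}\rightharpoonup 0$ weakly in $H^{1}(\rn)$. For $v\in C_{0}^{\infty}(\rn)$ the identity (\ref{eq: resolvent bilinear form}) reads
\[
\int_{\rn}\langle\nabla u_{k},a\cdot\nabla v\rangle\,dx+\int_{\rn}(b_{k}v)\cdot\nabla u_{k}\,dx+\int_{\rn}u_{k}v\,dx=\int_{\rn}fv\,dx.
\]
The first and third terms vanish in the limit by the weak $L^{2}$ convergence of $\nabla u_{k}$ and the strong $L^{2}$ convergence of $u_{k}$. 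For the drift, since $v\in L^{\infty}$ and $b_{k}\to b$ in $L^{2}$, we have $b_{k}v\to bv$ strongly in $L^{2}$, and pairing with $\nabla u_{k}\rightharpoonup 0$ gives $0$. Hence $(f,v)=0$ for every $v\in C_{0}^{\infty}(\rn)$, so $f=0$. The same argument applied to the adjoints $L_{k}^{*}=\divg(a\cdot\nabla)+b_{k}\cdot\nabla$, which satisfy the same hypotheses with drift $-b_{k}\to -b$, produces after a further extraction a strong $L^{2}$-limit of $(1-L_{k}^{*})^{-1}$ that coincides with the Hilbert-space adjoint $R^{*}$ and is injective; this is equivalent to density of the range of $R$.

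By the standard characterisation of pseudo-resolvents, $R$ is therefore the resolvent at $\alpha=1$ of a unique closed, densely defined operator $L$ on $L^{2}(\rn)$. To pass from $\alpha=1$ to arbitrary $\alpha>0$, I use the Neumann expansion
\[
(\alpha-L_{k})^{-1}=\sum_{n\geq 0}(1-\alpha)^{n}(1-L_{k})^{-(n+1)},
\]
which converges in operator norm uniformly in $k$ whenever $\vert 1-\alpha\vert<1$; strong convergence at $\alpha=1$ therefore propagates to a neighbourhood of $1$, and iterating the expansion around successive centres $\alpha_{0}$ (with radius at least $\alpha_{0}$, guaranteed by $\Vert(\alpha_{0}-L_{k})^{-1}\Vert\leq\alpha_{0}^{-1}$) extends strong convergence to all of $(0,\infty)$, with limit $(\alpha-L)^{-1}$ inheriting the bound $\alpha^{-1}$. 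The delicate point is the injectivity argument, where the drift integral $\int(b_{k}\cdot\nabla u_{k})v\,dx$ involves two sequences that converge only weakly; the rescue is to regroup it as $\int(b_{k}v)\cdot\nabla u_{k}\,dx$ and exploit the fact that $v\in L^{\infty}$ upgrades the $L^{2}$-strong convergence of $b_{k}$ to strong convergence of $b_{k}v$, which can then be paired against the weakly convergent $\nabla u_{k}$.
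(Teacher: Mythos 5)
Your proposal is correct and follows essentially the same route as the paper: diagonal extraction of a strongly convergent subsequence at $\alpha=1$ via Lemma \ref{lem: 3. compactness lemma}, identification of the limit as an injective operator with dense range (the latter through the adjoint family with drift $-b_{k}$), the definition $L=1-S^{-1}$, and propagation of the resolvent convergence to all $\alpha>0$. The only cosmetic differences are that you verify injectivity directly by passing to the limit in the weak formulation, regrouping the drift term as $\int(b_{k}v)\cdot\nabla u_{k}\,dx$ (where the paper invokes Proposition \ref{prop: uniqueness}), and that you replace the citation of Kato's approximation theorem by an explicit Neumann-series argument.
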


\begin{proof}
We first consider the case when $\alpha=1$. By Theorem 6 in \cite[Ch15]{lax2002functional},
convergence of bounded linear operators $\left\{ (1-L_{k})^{-1}\right\} _{k=1}^{\infty}$
is determined by its convergence on a dense subset of $L^{2}(\rn)$.
We apply Lemma \ref{lem: 3. compactness lemma} to $f$ in a countable
dense subset of $L^{2}(\rn)$. Then using the diagonal argument, we
can find a sub-sequence of $\left\{ (1-L_{k})^{-1}\right\} _{k=1}^{\infty}$
that converges strongly. We still denote the sub-sequence as $(1-L_{k})^{-1}$
and denote its limit as $S$, i.e. 
\[
(1-L_{k})^{-1}f\rightarrow Sf
\]
strongly in $L^{2}(\rn)$ for $f\in L^{2}(\rn)$.

Given any $f\in L^{2}(\rn)$, since $\{(1-L_{k})^{-1}f\}$ is weakly
compact in $H^{1}$, it also converges to $Sf$ weakly in $H^{1}$.
It is easy to see that the limit $Sf$ is a weak solution to $(1-L,f)$.
Since $S$ is bounded linear operator from $L^{2}(\rn)$ to itself,
we can define its adjoint operator $S^{*}$ by $\langle Sf,g\rangle=\langle f,S^{*}g\rangle$
for all $f,g\in L^{2}(\rn)$. We already know that $\lim_{k\rightarrow\infty}\langle(1-L_{k})^{-1}f,g\rangle=\langle Sf,g\rangle$
for all $f,g\in L^{2}(\rn)$ and $\langle(1-L_{k})^{-1}f,g\rangle=\langle f,(1-L_{k}^{*})^{-1}g\rangle$.
Hence we can see that $S^{*}g$ is a weak solution to $(1-L^{*},g)$.
Proposition \ref{prop: uniqueness} below implies that both $S$ and
$S^{*}$ have $K(S)=K(S^{*})=0$ and hence have dense range in $L^{2}(\rn)$
by $K(S^{*})=R(S)^{\perp}$. Now we can define $L=1-S^{-1}$, which
has dense domain $D(L)$ and $D(L)\subset H^{1}$. Since $S^{-1}$
is a closed operator, $L$ is also closed.

Clearly, for each $u\in D(L)$, it is the weak solution to $(-L,-Lu)$.
Hence for any $\alpha>0$, $(\alpha-L)$ is also a closed operator
and we can define $(\alpha-L)^{-1}f$ to be the weak solution to $(\alpha-L,f)$
for $f$ in the range of $(\alpha-L)$, i.e. $f\in R(\alpha-L)$.
Notice that for $(\alpha-L)$ and its dual operator $(\alpha-L^{\ast})$,
Proposition \ref{prop: uniqueness} implies that $K(\alpha-L^{\ast})=0$.
By the closed range theorem, we have that $R(\alpha-L)=K(\alpha-L^{\ast})^{\perp}=L^{2}(\rn)$.
Hence the resolvent operator $(\alpha-L)^{-1}$ is well defined for
any $\alpha>0$. Finally we prove that $(\alpha-L_{k})^{-1}f\rightarrow(\alpha-L)^{-1}f$
for any $\alpha>0$ and $f\in L^{2}(\rn)$. As shown in Theorem 1.3
in \cite[Ch.8]{kato2013perturbation}, we can derive
\begin{align*}
 & (\alpha-L_{k})^{-1}-(\alpha-L)^{-1}\\
 & \qquad\qquad=\left(1+(\alpha-1)(\alpha-L_{k})^{-1}\right)\left((1-L_{k})^{-1}-(1-L)^{-1}\right)\left(1+(\alpha-1)(\alpha-L)^{-1}\right),
\end{align*}
from the resolvent equation and $(1-L_{k})^{-1}f\rightarrow(1-L)^{-1}f$
implies that $(\alpha-L_{k})^{-1}f\rightarrow(\alpha-L)^{-1}f$ for
any $\alpha>0$. Since estimates (\ref{eq: elliptic estimate}) are
true for $\{(\alpha-L_{k})^{-1}f\}$, it is also true for the limit
$(\alpha-L)^{-1}f.$
\end{proof}
\begin{rem}
\label{rem: Unique} Given $f\in L^{2}(\rn)$, we have that $Sf$
is the limit of $\{(1-L_{k})^{-1}f\}$ weakly in $H^{1}(\rn)$ and
it is easy to check that $Sf$ is a weak solution to $(1-L,f)$. Next
we show that for $b\in L^{q}(\rn)\cap L^{2}(\rn)$, there is a unique
$S$ defined as in last Theorem. The uniqueness of $S$ implies that
the definition of $L$ is independent of the choice of the convergent
sub-sequence.
\end{rem}

\begin{prop}
\label{prop: uniqueness}Suppose $(a,b)$ satisfies conditions (\ref{eq: 1. ellipticity})
and (\ref{eq: 1. divergence free}). For any $f\in L^{2}$, there
exists a unique weak solution $u\in H^{1}$ to the elliptic problem
$(\alpha-L,f)$ for $n\geq3$, $b\in L^{q}(\rn)\cap L^{2}(\rn)$ and
$\alpha>0$.
\end{prop}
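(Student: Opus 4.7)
The statement contains two claims: existence of a weak solution $u \in H^{1}(\rn)$ and its uniqueness. I would treat them separately. Existence will be obtained by approximation, essentially repackaging Lemma~\ref{lem: 3. compactness lemma}; the real content is the uniqueness, which I would prove by a cut-off test combined with the divergence-free condition and the Sobolev embedding adapted to $q>n/2$.

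For existence, pick divergence-free smooth $b_{k}\to b$ in $L^{2}(\rn)\cap L^{q}(\rn)$ and let $u_{k}=(\alpha-L_{k})^{-1}f$, which by Theorem~\ref{thm: Dirichlet form} is the unique weak solution to $(\alpha-L_{k},f)$ and satisfies a uniform $H^{1}$-bound via (\ref{eq: elliptic estimate}). After extracting a subsequence, $u_{k}\rightharpoonup u$ in $H^{1}(\rn)$ and, by Lemma~\ref{lem: 3. compactness lemma}, strongly in $L^{2}(\rn)$. Passing to the limit in
\[
\int_{\rn}\langle\nabla u_{k},a\cdot\nabla\varphi\rangle+(b_{k}\cdot\nabla u_{k})\varphi+\alpha u_{k}\varphi\ dx=\int_{\rn}f\varphi\ dx
\]
for $\varphi\in C_{0}^{\infty}(\rn)$ is routine for the first and third terms. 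For the drift term, observe that $b_{k}\varphi\to b\varphi$ strongly in $L^{2}(\rn)$ (compact support of $\varphi$), while $\nabla u_{k}\rightharpoonup\nabla u$ weakly in $L^{2}(\rn)$, so their pairing converges. Thus $u$ is a weak solution to $(\alpha-L,f)$.

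For uniqueness, suppose $u\in H^{1}(\rn)$ satisfies the homogeneous equation and let $\eta_{R}\in C_{0}^{\infty}(\rn)$ be a standard cut-off with $\eta_{R}=1$ on $\{|x|\le R\}$, supported in $\{|x|\le 2R\}$, and $|\nabla\eta_{R}|\le C/R$. Testing with (a mollification of) $u\eta_{R}^{2}\in H^{1}(\rn)$ and using that $b$ is divergence-free to rewrite
\[
\int_{\rn}(b\cdot\nabla u)u\eta_{R}^{2}\ dx=\tfrac{1}{2}\int_{\rn}b\cdot\nabla(u^{2})\eta_{R}^{2}\ dx=-\int_{\rn}u^{2}\eta_{R}(b\cdot\nabla\eta_{R})\ dx,
\]
one arrives at
\[
\int_{\rn}\langle\nabla u,a\cdot\nabla u\rangle\eta_{R}^{2}+\alpha u^{2}\eta_{R}^{2}\ dx=-\int_{\rn}2u\eta_{R}\langle\nabla u,a\cdot\nabla\eta_{R}\rangle\ dx+\int_{\rn}u^{2}\eta_{R}(b\cdot\nabla\eta_{R})\ dx.
\]
The first term on the right is absorbed into $\tfrac{\lambda}{2}\int|\nabla u|^{2}\eta_{R}^{2}$ by Cauchy--Schwarz at the price of $C\int u^{2}|\nabla\eta_{R}|^{2}\ dx\le(C/R^{2})\|u\|_{L^{2}}^{2}\to0$. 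After showing the drift cut-off term vanishes in the limit, we obtain $\alpha\int u^{2}\ dx\le 0$, hence $u\equiv 0$.

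The main obstacle is exactly the control of the drift cut-off term, and this is where $q>n/2$ enters. By H\"older,
\[
\Bigl|\int_{\rn}u^{2}\eta_{R}(b\cdot\nabla\eta_{R})\ dx\Bigr|\le\frac{C}{R}\|u\|_{L^{2q/(q-1)}(\{R\le|x|\le 2R\})}^{2}\|b\|_{L^{q}(\{|x|\ge R\})}.
\]
Since $q>n/2$ is equivalent to $\tfrac{2q}{q-1}\le\tfrac{2n}{n-2}$, the Sobolev embedding $H^{1}(\rn)\hookrightarrow L^{2n/(n-2)}(\rn)$ bounds the first factor by $C\|u\|_{H^{1}}^{2}$ uniformly in $R$, while $\|b\|_{L^{q}(\{|x|\ge R\})}\to 0$ because $b\in L^{q}(\rn)$. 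Letting $R\to\infty$ finishes the argument. A small technical point is that $u\eta_{R}^{2}$ is only in $H^{1}$ with compact support, so one justifies its use as a test function by mollifying and passing to the limit; the $L^{2}\cap L^{q}$ assumptions on $b$ make every term in the approximation stable.
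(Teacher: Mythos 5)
Your existence argument is fine and is essentially what the paper does (it simply points back to the approximation construction). The problem is in the uniqueness part, and it sits exactly at the point you dismiss as ``a small technical point.'' For $n/2<q<n$ the function $u\eta_{R}^{2}$ is \emph{not} an admissible test function, and no mollification rescues it: the drift term $\int_{\rn}(b\cdot\nabla u)\,u\,\eta_{R}^{2}\,dx$ need not converge absolutely. Indeed, the natural H\"older exponents are $b\in L^{q}$, $\nabla u\in L^{2}$, $u\in L^{2n/(n-2)}$, and
\[
\frac{1}{q}+\frac{1}{2}+\frac{n-2}{2n}=\frac{1}{q}+1-\frac{1}{n}>1\qquad\text{whenever }q<n,
\]
so $b\,(\nabla u)\,u$ is not known to be locally integrable. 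Consequently the identity $\int(b\cdot\nabla u)u\eta_{R}^{2}=-\int u^{2}\eta_{R}(b\cdot\nabla\eta_{R})$, on which your whole cancellation rests, cannot be obtained by approximating $u\eta_{R}^{2}$ by smooth test functions: passing to the limit in $\int(b\cdot\nabla u)\varphi_{\epsilon}$ requires $\varphi_{\epsilon}\to u\eta_{R}^{2}$ in $L^{2q/(q-2)}$, and $u\eta_{R}^{2}\in L^{2q/(q-2)}$ again forces $q\geq n$. Your right-hand side is finite (that part of your Sobolev/H\"older computation is correct), but the left-hand side is the one that must be given a meaning first. Since the entire interest of the proposition is the supercritical range $n/2<q<n$ (for $q\geq n$ uniqueness is classical), this is a genuine gap, not a formality.

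The paper closes this gap by inserting a truncation before the cut-off: it tests with $\bar{u}\varphi_{r}$ where $\bar{u}=u\wedge N\vee(-N)$. Because $\bar{u}$ is bounded, $b\bar{u}\in L^{2}(\rn)$ and $(b\cdot\nabla u)\bar{u}\in L^{1}(\rn)$, so every manipulation is absolutely convergent; the cut-off $\varphi_{r}$ can then be removed globally (no boundary term survives since $\bar u\varphi_r\to\bar u$ in $H^1$), and the drift contribution is killed by writing $\int b\cdot\nabla u\,\bar{u}=N\int_{\{u>N\}}b\cdot\nabla(u-N)^{+}-N\int_{\{u<-N\}}b\cdot\nabla(u+N)^{-}=0$ using the divergence-free condition, before finally letting $N\to\infty$. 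If you add this truncation layer to your argument (or simply replace your test function by $\bar{u}\eta_{R}^{2}$ and track the extra terms on $\{|u|>N\}$), your scheme becomes correct; as written, it does not establish uniqueness in the stated range of $q$.
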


\begin{proof}
The existence of weak solution can be obtained using the same approximation
argument as in Lemma \ref{lem: convergence of resolvent}. To prove
the uniqueness, we consider a weak solution $u$ to $(\alpha-L,0)$.
Here we can take the test function to be $h=\bar{u}\varphi$ with
$\bar{u}=u\wedge N\vee(-N)$ and $\varphi\in C_{0}^{\infty}$. Given
any $r>0$, there is $\varphi_{r}$ satisfying 
\[
\varphi_{r}=\begin{cases}
1 & \vert x\vert\leq\frac{r}{2}\\
0 & \vert x\vert\geq r
\end{cases},\qquad\vert\nabla\varphi\vert\leq\frac{4}{r}.
\]
Then we have 
\[
\int_{\rn}\langle\nabla u,a\cdot\nabla(\bar{u}\varphi_{r})\rangle+b\cdot\nabla u(\bar{u}\varphi_{r})+\alpha u(\bar{u}\varphi_{r})\ dx=0.
\]
Because $\bar{u}\varphi_{r}\rightarrow\bar{u}$ in $H^{1}(\rn)$ and
a.e., we can take $r\rightarrow\infty$ to obtain 
\[
\int_{\rn}\langle\nabla u,a\cdot\nabla(\bar{u})\rangle+b\cdot\nabla u(\bar{u})+\alpha u(\bar{u})\ dx=0.
\]
Next we consider the second term in the equation above. Since $\int_{\rn}b\cdot\nabla\bar{u}\bar{u}\ dx=0$,
we have 
\begin{align*}
\int_{\rn}b\cdot\nabla u\bar{u}\ dx & =\int_{\rn}b\cdot(\nabla u-\nabla\bar{u})\bar{u}\ dx\\
 & =N\int_{\{u>N\}}b\cdot(\nabla u-\nabla\bar{u})\ dx-N\int_{\{u<-N\}}b\cdot(\nabla u-\nabla\bar{u})\ dx=0
\end{align*}
Now we can take $N\rightarrow\infty$ to obtain 
\[
\int_{\rn}\langle\nabla u,a\cdot\nabla u\rangle+\alpha u^{2}\ dx=0
\]
and conclude that $u=0$.
\end{proof}
Finally, to prove the representation (\ref{eq: representation of the solution}),
we also need the convergence of the fundamental solutions.
\begin{prop}
\label{prop: tightness of finite dimensional}Given a sequence of
probability measures $\left\{ P_{n}\right\} $ on $\rn$ which have
densities $\left\{ f_{n}\right\} $ uniformly bounded from above by
a continuous function $h$. Suppose $h$ satisfies
\[
\lim_{R\rightarrow\infty}\int_{B(0,R)^{c}}h(x)\ dx=0,
\]
where $B(0,R)$ is the open ball in $\rn$ centered at $0$ with radius
$R$. Then $\left\{ P_{n}\right\} $ is weakly compact in the space
of probability measure. Suppose we take a convergent sub-sequence,
then its limit $P$ has density $f$ which is also bounded from above
by $h$.
\end{prop}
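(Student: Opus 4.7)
The plan is to reduce the proposition to two standard ingredients: tightness plus Prokhorov's theorem for weak relative compactness, and a portmanteau-type argument plus Radon--Nikodym for the density bound. The pointwise domination $f_n \leq h$ is what couples the two parts together.

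First I would establish tightness directly from the tail hypothesis. Given $\epsilon > 0$, choose $R$ so that $\int_{B(0,R)^c} h(x)\, dx < \epsilon$. Then for every $n$,
\[
P_n(B(0,R)^c) = \int_{B(0,R)^c} f_n(x)\, dx \leq \int_{B(0,R)^c} h(x)\, dx < \epsilon,
\]
and since $\overline{B(0,R)}$ is compact in $\rn$, Prokhorov's theorem yields that $\{P_n\}$ is weakly relatively compact in the space of probability measures, and every weak limit is again a probability measure.

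Next I would observe that $h$ itself is globally integrable: $h$ is continuous and therefore bounded on each closed ball, while its tail integral is finite by hypothesis, so $h \in L^{1}(\rn)$ and $h\, dx$ defines a finite regular Borel measure. With this in hand, fix a weak limit $P$ of a sub-sequence $P_{n_k}$. For any open $U \subset \rn$, the portmanteau theorem gives
\[
P(U) \leq \liminf_{k\to\infty} P_{n_k}(U) \leq \int_U h(x)\, dx.
\]
By outer regularity of both $P$ and $h\, dx$, this extends to $P(A) \leq \int_A h(x)\, dx$ for every Borel set $A \subset \rn$. In particular $P$ vanishes on Lebesgue-null sets, so Radon--Nikodym produces a density $f = dP/dx$, and the inequality $\int_A f\, dx \leq \int_A h\, dx$ holding for all Borel $A$ forces $f \leq h$ almost everywhere.

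The only delicate point is the promotion of the portmanteau inequality from open sets to all Borel sets; this is what requires the global integrability of $h$, which is why I extract $h \in L^{1}(\rn)$ from continuity and the tail condition before passing to the limit. The rest of the argument is a direct application of standard weak-convergence machinery.
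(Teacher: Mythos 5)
Your proof is correct, and its first half (tightness from the tail bound on $h$, then Prokhorov) is exactly what the paper does. Where you diverge is in establishing the density bound: the paper first proves absolute continuity of $P$ by squeezing a Lebesgue-null set between shrinking open sets, and then proves $f\leq h$ by contradiction, using the continuity of $h$ to find an open set $O\supset A$ with $P(O)>\int_{O}h\geq P_{n}(O)$. You instead prove the single inequality $P(A)\leq\int_{A}h\,dx$ for all Borel $A$, by combining the portmanteau bound on open sets with outer regularity of the finite measure $h\,dx$, and then read off both absolute continuity and $f\leq h$ at once. Your route is cleaner and arguably more robust: the paper's final step quietly relies on being able to choose $O$ so that $\int_{O\setminus A}h$ is small (which again needs regularity plus local integrability of $h$), whereas you make the needed ingredient --- $h\in L^{1}(\rn)$, hence $h\,dx$ is a finite regular Borel measure --- explicit up front. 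The one point worth stating carefully in your write-up is the last step: from $\int_{A}(f-h)\,dx\leq0$ for all Borel $A$, apply it to $A=\{f>h\}$ to conclude $m(\{f>h\})=0$; both integrals are finite there, so the subtraction is legitimate.
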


\begin{proof}
It is easy to see that $\left\{ P_{n}\right\} $ is tight, which implies
that it is weakly compact by Prohorov's theorem. So we just need to
show that $P$ has density $f$ which is bounded by $h$. Firstly,
we show that $P$ is absolutely continuous with respect to the Lebesgue
measure $m$. Suppose $A\subset\rn$ such that $m(A)=0$, then there
is a decreasing sequence of open sets $\{O_{i}\}$ containing $A$
such that $\lim_{i\rightarrow\infty}m(O_{i})=0$. Therefore $\lim_{i\rightarrow\infty}P_{n}(O_{i})\rightarrow0$
uniformly for all $P_{n}$. By the Portmanteau theorem \cite[Theorem 1.1.1]{stroock2007multidimensional},
we have $P(O_{i})\leq\limsup_{n\rightarrow\infty}P_{n}(O_{i})$, which
implies that $\lim_{i\rightarrow\infty}P(O_{i})=0$ and hence $P(A)=0$.
So $P$ has a density $f$ by Radon–Nikodym's theorem.

Next we show that this $f$ is bounded by $h$. If not, we can find
a bounded set $A$ such that $m(A)>0$ and $f>h$ a.e. on $A$. Since
$h$ is continuous, we can find an open set $O$ small enough such
that it contains $A$ and $P(O)>\int_{O}h\geq P_{n}(O)$ for all $n$.
Clearly this contradicts to that $P_{n}\rightarrow P$ weakly in measure.
\end{proof}
Now we are in a position to complete the proof of Theorem \ref{thm: Main result}.
\begin{proof}
By the fundamental approximation theorem of semi-groups in \cite[Cp 9, Theorem 2.16]{kato2013perturbation},
the convergence of resolvents proved in Lemma \ref{lem: convergence of resolvent}
implies that $e^{tL_{k}}\rightarrow e^{tL}$ as bounded linear operators
from $L^{2}(\rn)$ to $L^{2}(\rn)$ and is uniform for $t$ in any
finite interval $[0,T]$. Moreover, we know that $L$ is unique by
Remark \ref{rem: Unique} and Proposition \ref{prop: uniqueness}.
Using Proposition \ref{prop: approximation 2}, we know that any weak
solution to (\ref{eq: problem equation 1}) with initial condition
$u_{0}\in L^{2}(\mathbb{R}^{n})$ must equal to $e^{tL}u_{0}$. Therefore,
$e^{tL}$ is the unique semi-group which generates the unique weak
solution.

Next, we show the existence of the unique fundamental solution corresponding
to (\ref{eq: problem equation 1}). We have proved above that $u_{k}\rightarrow e^{tL}u_{0}$
in $L^{2}(\rn)$, which implies that there is a subsequence, denoted
as $u_{k}$ again, converging a.e. to $e^{tL}u_{0}$. Let $\Gamma_{k}(t,x,y)$
be the corresponding fundamental solution to $(\partial_{t}-L_{k})u=0$.
Then
\[
u_{k}(t,x)=\int_{\rn}\Gamma_{k}(t,x,y)u_{0}(y)\ dy=e^{tL_{k}}u_{0}
\]
for any $u_{0}\in L^{2}(\rn)$ and $k=1,2,\cdots$. By Theorem \ref{thm: 1 upper bound}
and Proposition \ref{prop: tightness of finite dimensional}, we have
that for each fixed $(t,x)$ (and $(t,y)$), the family of transition
probabilities $\left\{ \Gamma_{k}(t,x,y)\ dy\right\} $ (and also
the family$\left\{ \Gamma_{k}(t,x,y)\ dx\right\} $) is tight and
hence converges weakly in measure to some $\Gamma(t,x,y)\ dy$ which
has the same upper bound as that of $\Gamma_{k}(t,x,y)$. Define 
\[
u(t,x)=\int_{\rn}\Gamma(t,x,y)u_{0}(y)\ dy
\]
for $u_{0}\in C_{0}^{\infty}(\rn)$, then for each fixed $(t,x)$,
$u_{k}(t,x)$ has a subsequence that converges to $u(t,x)$ by the
weak convergence of measure. Now we proved that $u=e^{tL}u_{0}$ in
$L^{2}(\rn)$. Since $C_{0}^{\infty}(\rn)$ is dense in $L^{2}(\rn)$,
we can extend it to $L^{2}(\rn)$ and conclude that operator $e^{tL}$
has a kernel $\Gamma(t,x,y)$.
\end{proof}
\bibliographystyle{plain}
\bibliography{L2semigroup}

\end{document}